\newtheorem{theorem}{Theorem}
\newtheorem{lemma}{Lemma}[section]
\newtheorem{prop}{Proposition}[section]
\newtheorem{cor}{Corollary}
\theoremstyle{definition}
\theoremstyle{remark}
\newtheorem{remark}{Remark}[section]
\numberwithin{equation}{section}
\newcommand{\abs}[1]{\left\lvert#1\right\rvert}
\newcommand{\norm}[1]{\left\lVert#1\right\rVert}
\newcommand{\CA}{\mathcal{A}}
\newcommand{\CB}{\mathcal{B}}
\newcommand{\CC}{\mathcal{C}}
\newcommand{\CE}{\mathcal{E}}
\newcommand{\CH}{\mathcal{H}}
\newcommand{\CI}{\mathcal{I}}
\newcommand{\CJ}{\mathcal{J}}
\newcommand{\CM}{\mathcal{M}}
\begin{document}

\title[Distribution of the number of points on curves]
{On the distribution of the number of points on a family of curves over finite fields}

\author{Kit-Ho Mak}
\address{Department of Mathematics \\
University of Illinois at Urbana-Champaign \\
273 Altgeld Hall, MC-382 \\
1409 W. Green Street \\
Urbana, Illinois 61801, USA}
\email{mak4@illinois.edu}

\author{Alexandru Zaharescu}
\address{Department of Mathematics \\
University of Illinois at Urbana-Champaign \\
273 Altgeld Hall, MC-382 \\
1409 W. Green Street \\
Urbana, Illinois 61801, USA}
\email{zaharesc@math.uiuc.edu}

\subjclass[2010]{Primary 11G20, 11T55}
\keywords{rational points, algebraic curves, uniform distribution, power residues}

\thanks{The second author is supported by NSF grant number DMS - 0901621.}

\begin{abstract}
Let $p$ be a large prime, $\ell\geq 2$ be a positive integer, $m\geq 2$ be an integer relatively prime to $\ell$ and $P(x)\in\mathbb{F}_p[x]$ be a polynomial which is not a complete $\ell'$-th power for any $\ell'$ for which $GCD(\ell',\ell)=1$. Let $\mathcal{C}$ be the curve defined by the equation $y^{\ell}=P(x)$, and take the points on $\mathcal{C}$ to lie in the rectangle $[0,p-1]^2$. In this paper, we study the distribution of the number of points on $\mathcal{C}$ inside a small rectangle among residue classes modulo $m$ when we move the rectangle around in $[0,p-1]^2$.
\end{abstract}

\maketitle

\section{Introduction}

Since Weil's proof of the Riemann hypothesis for algebraic curves over finite fields \cite{Wei48b}, there have been numerous studies on the number of rational points of an algebraic curve over a finite field in a specified set of number theoretic interest. Examples include studies of bounds on the number of rational points in a smaller region inside $[0,p-1]^2$ (see for example Myerson \cite{Mye81}, Fujiwara \cite{Fuj88}, and \cite{MaZa10}), bounds on the number of points in sets with prescribed congruence conditions on the coordinates (known as Lehmer problems, see for example Zhang \cite{Zha93, Zha94}, Cobeli and one of the authors \cite{CoZa01} and Bourgain, Cochrane, Paulhus and Pinner \cite{BCPP11}), bounds on the number of visible points (see Shparlinski \cite{Shp06}, Shparlinski and Voloch \cite{ShVo07}, Shparlinski and Winterhof \cite{ShWi08}, Chan and Shparlinski \cite{ChSh10}) and the fluctuations of the number of points among some families of curves (see Kurlberg and Rudnick \cite{KuRu09}, Xiong \cite{Xio10} and Bucur, David, Feigon, Lal{\'{\i}}n \cite{BDFL10a,BDFL10b}). Bounds for the number of rational points on curves in a small rectangle is crucial in the study of local spacings between fractional parts of $n^2\alpha$, see Rudnick, Sarnak and one of the authors \cite{RSZ01, Zah03}. Such questions have applications in mathematical physics, see the important works by Berry and Tabor \cite{BeTa77}, Rudnick and Sarnak \cite{RuSa98} and Sarnak \cite{Sar99}.

All the above works study analytic aspects of the number of points of families of curves over finite fields, such as bounds on the number of points and the fluctuation of the number of points along a family. In this paper we study an arithmetic property of the number of points on curves of the form
\begin{equation}\label{eqnc1}
y^{\ell}=P(x)
\end{equation}
over $\mathbb{F}_p$, when the curve is absolutely irreducible. To make it precise, we take the rational points on the curve $\CC$ as a subset in $[0,p-1]^2$, and let $\Omega\subseteq[0,p-1]^2$ be a rectangular ``window''. Instead of asking how many points are captured by $\Omega$, we ask the following question: if we move the window around the domain, what is the probability that the number of captured points is even (or odd)? This kind of problem dates back to Gauss when he proved the well-known Gauss lemma for quadratic residues, i.e. if $GCD(a,p)=1$, then if $r$ is the number of elements in the set $\{a,2a,\ldots,(\frac{p-1}{2})a\}$ that have least positive residue greater than $p/2$, then the Legendre symbol satisfies $\frac{a}{p}=(-1)^r$. Formulating in our language, this is to consider the number of points on the line $y=ax$ inside the rectangle $[1,(p-1)/2]\times(p/2,p-1]$, and then look at its residue class modulo $2$. We also note that the uniformity modulo $m$ of the values of some multiplicative functions, such as the Ramanujan tau function, was investigated by Serre \cite{Ser75}. For more results on the uniform distribution of the values of multiplicative functions modulo $m$, the reader is referred to the monograph of Narkiewicz \cite{Nar84}. Recently, Lamzouri and one of the authors \cite{LaZa11} have studied the distribution of real character sums modulo $m$.

In the present paper, given a positive integer $m$, we ask about the distribution of the number of points captured by the window $\Omega$ among each congruence class of $m$ when we move it around the domain. Since it is believed that the set of rational points on a curve exhibits a strong random behaviour, one may expect that the above mentioned probability is $1/m$. We prove that this is indeed the case when $\Omega$ has full length in the $y$-coordinate in Theorem \ref{thm1}. Next, we consider the joint distribution of the number of points on several different curves of the same form as \eqref{eqnc1}. We will see that under some natural conditions, the distributions on these different curves are independent. After that, we show that restricting the $y$-coordinate of the rectangle will retain the uniform distribution among residue classes modulo $m$. Finally, we will give an application on the distribution of $\ell$-th power residues and nonresidues in the last section.

The idea here is to relate our problems of studying the distribution of number of points modulo $m$ to that of random walks on the additive group $\mathbb{Z}/m\mathbb{Z}$. The idea is to use results on random walks showing that the distribution modulo $m$ in the random walk situation is uniform, and then show that the difference from our problem to that of the random walks can be handled, so that we get uniform distribution modulo $m$ in our context as well. For information on random walks on finite groups, the reader is referred to \cite{Hil05, Spi76}. One important feature of our result is that uniform distribution occurs already when we consider the number of points in very short intervals.

\section{Statement of Main Results}

We first fix some notations. Let $p$ be a large prime and let $\ell\geq 2$ be an integer. For a polynomial $P(x)\in\mathbb{F}_p[x]$, let $\CC$ be the curve over $\mathbb{F}_p$ defined by the equation $y^{\ell}=P(x)$. Let $I$ be a fixed positive integer (which will serve as the length of our rectangles). Define $N_{\CC}(x_0,I)$ to be the number of points on $\CC$ inside the rectangle $R_{x_0}=(x_0,x_0+I]\times[0,p-1]$, i.e.
\begin{equation*}
N_{\CC}(x_0,I) = \#\{ (x,y)\in\CC(\mathbb{F}_p) : x_0 < x \leq x_0+I \}.
\end{equation*}
Let $\CI\subseteq[0,p-1]$ be an interval, and denote $\abs{\CI}=\#(\CI\cap\mathbb{Z})$. For any $m$ with $GCD(m,\ell)=1$, we define $\Phi_{p}(P,m,a)$ to be the proportion of values $x_0\in\CI$ such that $N_{\CC}(x_0,I)\equiv a \mod{m}$, i.e.
\begin{equation*}
\Phi_{\CC}(m,a)=\frac{1}{\abs{\CI}}\#\{ 0 \leq x_0 \leq p-1 : N_{\CC}(x_0,I)\equiv a \mod{m} \}.
\end{equation*}

Our first result is that when one moves the rectangles $R_{x_0}$ along the $x$-direction, the $N_{\CC}(x_0,I)$ becomes uniformly distributed modulo $m$. Note that the distribution and the main term of the discrepancy does not depend on the lengths of the intervals $I$ and $\CI$, nor the particular position of $\CI$ as long as the conditions in the theorem are satisfied.
\begin{theorem}\label{thm1}
Let $p$ be a large prime and $P(x)\in\mathbb{F}_p[x]$ be a nonconstant polynomial of degree $d$ which is not a complete $\ell'$-th power for any $\ell'$ with $GCD(\ell',\ell)=1$. Let $L=L(p)<\frac{\log{p}}{2\log{4d}}$ be an integral function of $p$ such that $L(p)\rightarrow\infty$ as $p\rightarrow\infty$. Suppose $\CI$ is an interval such that $\CI\gg p^{\frac{1}{2}+\varepsilon}$ for some $\varepsilon>0$, and $I$ is an integer with $p-L>I>L$. Then for any positive integer $m$ with $GCD(m,\ell)=1$ we have
\begin{equation*}
\sum_{a=0}^{m-1}\left(\Phi_{\CC}(m,a)-\frac{1}{m}\right)^2 \leq \frac{7m^3\ell^2}{L(p)}+O\left( \frac{m^3 \ell^3 L(p) \sqrt{p}\log{p}}{\abs{\CI}} \right).
\end{equation*}
\end{theorem}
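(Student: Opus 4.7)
By Parseval's identity on $\mathbb{Z}/m\mathbb{Z}$,
\[
\sum_{a=0}^{m-1}\left(\Phi_{\CC}(m,a)-\frac{1}{m}\right)^2 \;=\; \frac{1}{m}\sum_{k=1}^{m-1}|S_k|^2, \qquad S_k := \frac{1}{|\CI|}\sum_{x_0\in\CI} e^{2\pi i k N_{\CC}(x_0,I)/m},
\]
so the task reduces to bounding $|S_k|$ uniformly for $1\leq k\leq m-1$. Fix such a $k$, set $\zeta = e^{2\pi i k/m}$ and $r = \gcd(\ell,p-1)$. Since $\gcd(m,\ell)=1$, also $\gcd(m,r)=1$, so $\beta := \zeta^r$ is a nontrivial $m$-th root of unity. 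With $n(x) := \#\{y\in\mathbb{F}_p : y^\ell = P(x)\}$ and a fixed multiplicative character $\chi$ of order $r$ on $\mathbb{F}_p^{*}$, for each $x$ with $P(x)\neq 0$ one has the decomposition
\[
\zeta^{n(x)} \;=\; c_0 + \sum_{j=1}^{r-1}c_j\chi^j(P(x)), \qquad c_0 = \frac{r-1+\beta}{r},\quad c_j = \frac{\beta-1}{r}\ (1\leq j\leq r-1),
\]
with $|c_j|\leq 2/r$ for $j\geq 1$ and $|c_0|<1$. The $O(d)$ exceptional $x$ with $P(x)=0$ cause only a bounded additive shift in $N_{\CC}(x_0,I)$ and are absorbed in the error.

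The key step is to isolate a short sub-window of length $L$ at the right end of $(x_0,x_0+I]$:
\[
\zeta^{N_{\CC}(x_0,I)} \;=\; \zeta^{N_{\CC}(x_0,\,I-L)}\prod_{i=1}^L \zeta^{n(x_0+I-L+i)},
\]
and expand the short product into $r^L$ terms of the form $\bigl(\prod_i c_{j_i}\bigr)\chi(H_{\vec j}(x_0))$, where $H_{\vec j}(x):=\prod_{i=1}^L P(x+I-L+i)^{j_i}$. The trivial tuple $\vec j=\vec 0$ yields a ``random walk main term'' with prefactor $c_0^L$: a direct computation gives $|c_0|^2 = 1 - 2(r-1)(1-\cos(2\pi kr/m))/r^2$, which for $k\not\equiv 0\pmod{m}$ is bounded away from $1$ by $\gtrsim 1/(rm^2)$; thus $|c_0|^L$ decays exponentially in $L$ uniformly in $k$ and is comfortably $O(1/L)$ for $L\to\infty$ with constants depending only on $m$ and $\ell$, producing the first error contribution of size $m^3\ell^2/L$ after squaring and summing $|S_k|^2$ over $k$. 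For each nontrivial $\vec j\neq\vec 0$, the hypothesis on $P$ forces $H_{\vec j}(x)$ not to be a complete $r$-th power in $\mathbb{F}_p[x]$, so Weil's bound together with the standard Polya--Vinogradov completion for incomplete character sums over $\CI$ yields $\bigl|\sum_{x_0\in\CI}\zeta^{N_{\CC}(x_0,I-L)}\chi(H_{\vec j}(x_0))\bigr| \ll L\sqrt{p}\log p$. Summing over the $\leq r^L \leq (4d)^L \ll \sqrt{p}$ nontrivial tuples (using $L<\log p/(2\log(4d))$) with weights $|\prod_i c_{j_i}|$ gives the second error contribution of size $m^3\ell^3L\sqrt{p}\log p/|\CI|$.

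The main technical hurdle I anticipate is rigorously controlling the long oscillatory weight $\zeta^{N_{\CC}(x_0,I-L)}$ inside the above character sum, since expanding it into its own multiplicative characters would produce $r^{I-L}$ new terms---a catastrophic combinatorial blow-up. The fix is to treat this weight (of modulus one) as a bounded coefficient in a secondary Fourier expansion in the $x_0$-variable, combined with Weyl differencing so that it is absorbed uniformly into the completed Weil bound; alternatively, one arranges the decomposition so that the long factor is shared across translates of $x_0$ and drops out by cancellation. Combining the two error contributions, squaring and summing $|S_k|^2$ over $k\in\{1,\ldots,m-1\}$, one recovers the claimed estimate on $\sum_a(\Phi_{\CC}(m,a)-1/m)^2$; the balance between the random-walk decay $\sim 1/L$ and the Weil remainder $\sim L\sqrt{p}\log p/|\CI|$ is precisely the mechanism forcing the hypothesized upper bound $L(p)<\log p/(2\log(4d))$.
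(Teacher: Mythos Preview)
Your Parseval reduction and the short-window character expansion are natural first moves, and you have correctly located the obstruction: in the sum
\[
\sum_{x_0\in\CI}\zeta^{\,N_{\CC}(x_0,\,I-L)}\,\chi\bigl(H_{\vec j}(x_0)\bigr)
\]
the weight $\zeta^{N_{\CC}(x_0,I-L)}$ is not of a shape to which Weil's theorem applies. But neither of your proposed fixes actually closes this gap. ``P\'olya--Vinogradov completion'' only replaces the indicator of $\CI$ by additive characters $e_p(tx_0)$; the long weight survives intact in the completed sum, and Weil still does not apply. Treating the weight as a bounded coefficient in an additive Fourier expansion over $x_0$ is circular, since its Fourier coefficients are themselves sums of the same type as $S_k$. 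And Weyl/van~der~Corput differencing with shift $h$ replaces the long weight by $\zeta^{N_{\CC}(x_0+h,I-L)-N_{\CC}(x_0,I-L)}$, which for small $h$ is again a product of character values at $O(h)$ points and must itself be expanded into $r^{O(h)}$ multiplicative-character terms; one is back at the starting point with no net saving. As written, the nontrivial tuples $\vec j\neq\vec 0$ are not under control and the argument does not close.

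The paper's resolution is precisely your second suggestion (``arrange so the long factor is shared across translates and drops out''), carried out concretely. One does \emph{not} bound $S_k$ directly. Instead, partition $\CI$ into $N\approx|\CI|/L$ blocks $\{iL+1,\dots,iL+L\}$ and, using Cauchy--Schwarz, reduce to bounding $\sum_a\sum_i\bigl(R_{P,m,a}(i,L)-L/m\bigr)^2$ where $R_{P,m,a}(i,L)=\#\{1\le x\le L:N_{\CC}(iL+x,I)\equiv a\pmod m\}$. The key telescoping identity is
\[
N_{\CC}(iL+x,I)=N_{\CC}(iL,I)+\sum_{j=1}^{x}n(iL+I+j)-\sum_{j=1}^{x}n(iL+j),
\]
so the long quantity $N_{\CC}(iL,I)$ is \emph{constant on the block}. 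Because one is summing the squared discrepancy over \emph{all} residues $a$, the substitution $b=a-N_{\CC}(iL,I)$ eliminates it completely; what remains depends only on the $2L$ character values $\chi_\ell(P(iL+j)),\,\chi_\ell(P(iL+I+j))$ for $1\le j\le L$ (note: both endpoints of the sliding window appear, not just the right end as in your decomposition). One then groups blocks $i$ by their pattern $\mathbf w\in\mu_\ell^{2L}$, counts each pattern via the Weil bound (this is where Lemma~\ref{lem22} and $2L<\log p/\log(4d)$ enter), and applies the random-walk estimate of Proposition~\ref{prop21} to the pattern sum. This blocking-plus-residue-averaging device is the substantive idea missing from your sketch.
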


\begin{cor}\label{cor1}
Assumptions and notations are as in Theorem \ref{thm1}. If $m=o(L(p)^{1/5})$, then
\begin{equation*}
\Phi_{\CC}(m,a)=\frac{1}{m}+O\left( \sqrt{\frac{m^3\ell^2}{L(p)}} \right),
\end{equation*}
uniformly for all $0 \leq a \leq m-1$.
\end{cor}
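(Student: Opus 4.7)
The plan is to derive Corollary \ref{cor1} as an immediate pointwise consequence of Theorem \ref{thm1}. Since each term $\bigl(\Phi_{\CC}(m,a)-\tfrac{1}{m}\bigr)^2$ on the left-hand side of the inequality in Theorem \ref{thm1} is non-negative, any single term is bounded above by the whole sum. Thus, for every $a \in \{0,1,\ldots,m-1\}$, I would write
\[
\left(\Phi_{\CC}(m,a) - \frac{1}{m}\right)^{\!2} \;\le\; \frac{7m^3\ell^2}{L(p)} + O\!\left(\frac{m^3\ell^3 L(p)\sqrt{p}\log p}{|\CI|}\right),
\]
and then take the positive square root to obtain a uniform estimate of the shape $\Phi_{\CC}(m,a) = \tfrac{1}{m} + O(\sqrt{\cdot})$ valid for every residue class.

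The next step is to show that the secondary error term is absorbed into the principal one. Using $|\CI| \gg p^{1/2+\varepsilon}$, the secondary term is at most $O\!\bigl(m^3\ell^3\, L(p)\,\log p\,/\,p^{\varepsilon}\bigr)$. The hypothesis $L(p) < \log p/(2\log 4d)$ from Theorem \ref{thm1} forces $L(p)\log p = O((\log p)^2)$, so this quantity decays faster than any negative power of $p$. Meanwhile, the principal term $m^3\ell^2/L(p)$ is at least of order $1/\log p$ (since $L(p) = O(\log p)$), and the additional assumption $m = o(L(p)^{1/5})$ only keeps $m^3$ polylogarithmic. It follows that the secondary term is negligible compared to the principal one for $p$ sufficiently large, so the square root of the full bound reduces to $O\!\bigl(\sqrt{m^3\ell^2/L(p)}\bigr)$, which is exactly the claimed estimate.

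There is no substantial obstacle here beyond this bookkeeping; the essential content is already contained in Theorem \ref{thm1}, and the growth restriction $m = o(L(p)^{1/5})$ in the corollary serves only to guarantee that the bound $\sqrt{m^3\ell^2/L(p)}$ is genuinely smaller than the trivial gap $1/m$ between the main term and zero, so that the statement conveys asymptotic uniformity of $\Phi_{\CC}(m,a)$ across residue classes.
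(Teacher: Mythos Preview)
Your argument is correct and is exactly the intended derivation: the paper states Corollary \ref{cor1} as an immediate consequence of Theorem \ref{thm1} without giving a separate proof, and your reasoning---bounding a single squared deviation by the full sum, absorbing the secondary error via $|\CI|\gg p^{1/2+\varepsilon}$ and $L(p)=O(\log p)$, and taking square roots---is precisely what is implicit. Your observation that $m=o(L(p)^{1/5})$ is what makes the error genuinely smaller than $1/m$ is also on point.
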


\begin{remark}
Our assumption that $GCD(m,\ell)=1$ is necessary in order to obtain uniform distribution. For example, if we consider the elliptic curve $E$ defined by $y^2=x^3-n^2x$, then for each $x\neq 0,n,-n$, either there are two $y$ so that $(x,y)\in E(\mathbb{F}_p)$, or there are none. Thus $N_{E}(x_0,I)$ is almost always even, and so one cannot have uniform distribution modulo $2$. We remark that the distribution modulo $2$ in this example depends on the location of the roots of the polynomial $P(x)=x^3-n^2x$.

Although one cannot expect uniform distribution for a particular $p$ when $m$ and $\ell$ are not relatively prime, it may still be possible to have uniform distribution when we take an average over $p$. For example, let $E_p$ be the elliptic curve $y^2=x^3+x$ over $\mathbb{F}_p$, and let $m=2$. The distribution of $N_{E}(x_0,I)$ for a particular prime $p$ might not be uniform, but instead depends on the locations of the roots of $x^2+1 \mod{p}$. Now we take $N$ to be a large integer, and take an average over all primes $p\equiv 1\pmod{4}$, $p\leq N$ (here for each $p$ we normalize the points in $E_p$ by $(x,y)\mapsto(\frac{x}{p},\frac{y}{p})$, so that we have a fixed domain for all $p$). By a well-known result of Duke, Friedlander and Iwaniec \cite{DFI95}, the fractional parts $\frac{\nu}{p}$ of the roots of $x^2+1\mod{p}$ are uniformly distributed as $p$ varies. Therefore, the average values over $p\leq N$ of the number of points inside a rectangle $(x_0+I)\times[0,1)$ will be uniformly distributed modulo $2$ when $x_0$ varies.
\end{remark}

After studying the distribution of the number of points on the curve $\CC$, we continue to consider the joint distribution of the number of points on curves of the form
\begin{align*}
\CC_l: y^{\ell}=P_l(x)
\end{align*}
for $1\leq l\leq k$, where $k$ is a positive integer, and all $P_l(x)\in\mathbf{F}_p[x]$ are polynomials that are not complete $\ell$-th powers. Define
\begin{equation*}
N_{l}(x_0,I) = N_{\CC_l}(x_0,I)=\#\{ (x,y)\in\CC_l(\mathbb{F}_p : x_0 < x \leq x_0+I \},
\end{equation*}
and for any vector $\textbf{a}=(a_1,\ldots,a_k)\in\mathbb{Z}^k$,
\begin{equation*}
\Phi(m,\textbf{a})=\frac{1}{\abs{\CI}}\#\{ 0 \leq x_l \leq p-1 : N_{l}(x_l,I)\equiv a_l \mod{m} ~ \forall 1\leq l\leq k \}.
\end{equation*}
Our first observation is that various $N_{l}$'s might not be independent of each other.

\begin{remark}
For example, let $\ell=3$, $P_1(x)=x$ and $P_2(x)=x^2$, i.e.
\begin{align*}
\CC_1: & y^3=x, \\
\CC_2: & y^3=x^2.
\end{align*}
Then we claim that $N_1(x_0,I)=N_2(x_0,I)$ for any $x_0$ and $I$. Indeed, fix an $x$. If $x=0$, then both curves have a unique $y$. If $x\neq 0$ and $\CC_1$ has a point $(x,y)$, then $(x,y^2)$ is a point on $\CC_2$. Conversely, if $x\neq 0$ and $(x,y)$ is a point on $\CC_2$, then $(x,y^2/x)$ is a point on $\CC_1$. Therefore, $N_1=N_2$ as the number of points above any $x$ is the same for both curves. As an immediate consequence, for any $\textbf{a}=(a_1,a_2)$, we have
\begin{equation*}
\Phi(m,\textbf{a})=
\begin{cases}
\frac{1}{m} &, a_1=a_2, \\
0 &, a_1\neq a_2.
\end{cases}
\end{equation*}
\end{remark}

In view of the above remark, it is natural to introduce the following conditions. Let $P_1(x), \ldots, P_k(x)\in\mathbf{F}_p[x]$ be polynomials. We say that the set $\{P_1(x),\ldots,P_k(x)\}$ is \textit{multiplicatively dependent} if there exists integers (which may be positive or negative) $e_1,\ldots,e_l$ such that the combination
\begin{equation*}
Q(x)=P_1(x)^{e_1}\ldots P_k(x)^{e_k}
\end{equation*}
is identically $1$. The set of polynomials is \textit{multiplicatively independent} if it is not multiplicatively dependent.

If the polynomials are multiplicatively independent, we have the following result.
\begin{theorem}\label{thm2}
Let $k\geq 2$ be an integer. Let $p$ be a large prime and $P_1(x),\ldots,P_k(x)\in\mathbb{F}_p[x]$ be nonconstant polynomials of degree $d_1,\ldots,d_k$ respectively, which are not complete $\ell'$-th powers for any $\ell'$ with $GCD(\ell',\ell)=1$. Let $d=\max\{d_1,\ldots,d_k\}$. Suppose that the set of polynomials $\{P_1(x),\ldots,P_k(x)\}$ is multiplicatively independent. Let $L=L(p)<\frac{\log{p}}{2\log{4d}}$ be an integral function of $p$ such that $L(p)\rightarrow\infty$ as $p\rightarrow\infty$. Suppose $\CI$ is an interval such that $\CI\gg p^{\frac{1}{2}+\varepsilon}$ for some $\varepsilon>0$, and $I$ is an integer with $p-L>I>L$, then for any positive integer $m$ with $GCD(m,\ell)=1$, we have
\begin{equation*}
\sum_{\textbf{a}\in(\mathbb{Z}/m\mathbb{Z})^k}\left(\Phi(m,\textbf{a})-\frac{1}{m^k}\right)^2 \leq \frac{7m^{k+2}\ell^2}{L} +O\left(\frac{dkL\ell^3m^{k+2}\sqrt{p}\log{p}}{\abs{\CI}}\right)
\end{equation*}
\end{theorem}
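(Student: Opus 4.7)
The plan is to mimic the proof of Theorem \ref{thm1} by carrying out the Fourier analysis on $(\mathbb{Z}/m\mathbb{Z})^k$ in place of $\mathbb{Z}/m\mathbb{Z}$, and to invoke the multiplicative independence hypothesis at the point where Weil's theorem is applied. Setting $\textbf{N}(x_0, I) = (N_1(x_0, I), \ldots, N_k(x_0, I))$ and $e_m(t) = e^{2\pi i t/m}$, orthogonality on $(\mathbb{Z}/m\mathbb{Z})^k$ together with Parseval's identity reduces the left-hand side to
$$\sum_{\textbf{a}}\Bigl(\Phi(m,\textbf{a}) - \tfrac{1}{m^k}\Bigr)^2 = \frac{1}{m^k|\CI|^2}\sum_{\textbf{b} \ne \textbf{0}} |T(\textbf{b})|^2, \qquad T(\textbf{b}) := \sum_{x_0 \in \CI} e_m(\textbf{b} \cdot \textbf{N}(x_0, I)),$$
so it suffices to bound $T(\textbf{b})$ uniformly in nonzero $\textbf{b}$ and then sum over the $m^k - 1$ nonzero choices.

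Next I would apply the telescoping/random-walk device from Theorem \ref{thm1}. Shifting the starting point $x_0$ to $x_0 + r$ replaces $\textbf{N}(x_0, I)$ by $\textbf{N}(x_0, I) + \textbf{D}(x_0, r)$, where the $l$-th coordinate of $\textbf{D}(x_0, r)$ is the net change in the number of points on $\CC_l$ caused by shifting the window right by $r$. Averaging over $0 \le r < L$ and applying Cauchy--Schwarz reduces $|T(\textbf{b})|^2$ to estimates for
$$U(\textbf{b}; r_1, r_2) := \sum_{x_0 \in \CI} e_m\bigl(\textbf{b}\cdot(\textbf{D}(x_0, r_1) - \textbf{D}(x_0, r_2))\bigr), \qquad 0 \le r_1, r_2 < L,$$
plus boundary contributions that are handled exactly as in Theorem \ref{thm1}.

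To estimate $U(\textbf{b}; r_1, r_2)$, I would write the number of solutions to $y^\ell = P_l(x)$ as $1 + \sum_{j=1}^{\ell-1} \chi^j(P_l(x))$ for a fixed order-$\ell$ character $\chi$ (with convention $\chi^j(0) = 0$ for $j \neq 0$), and use the inverse discrete Fourier transform on $\mathbb{Z}/\ell\mathbb{Z}$ -- valid since $\gcd(m, \ell) = 1$ -- to represent each factor $e_m(b_l n_l(x))$ as a linear combination of $\chi^j(P_l(x))$. Expanding out, $U(\textbf{b}; r_1, r_2)$ becomes a bounded linear combination of character sums of the form $\sum_{x_0 \in \CI} \chi^j\bigl(\prod_{l, s} P_l(x_0 + \alpha_{l, s})^{e_{l, s}}\bigr)$, with shifts $\alpha_{l, s}$ drawn from a union of intervals of length $L$. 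The main obstacle is verifying that this polynomial product is never a $\lambda$-th power for any divisor $\lambda > 1$ of $\ell$: this is exactly where multiplicative independence of $\{P_1, \ldots, P_k\}$ enters. Combined with the restriction $L < \log p/(2\log 4d)$, which keeps the total degree small enough to rule out spurious coincidences among distinct shifts modulo $p$, any hypothetical multiplicative relation among the shifted polynomials $\{P_l(x + \alpha)\}$ would descend to a relation in the original family, contrary to hypothesis. Weil's bound then gives $O(dkL\sqrt{p})$ per character sum, and assembling Parseval, Cauchy--Schwarz, and the Weil estimates -- then summing over the $m^k - 1$ nonzero $\textbf{b}$ -- produces the two terms of the claimed bound in direct parallel with Theorem \ref{thm1}.
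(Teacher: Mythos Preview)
Your approach is essentially the paper's, just reorganized: you apply Parseval on $(\mathbb{Z}/m\mathbb{Z})^k$ first and then perform a van der Corput--type shift-and-average, whereas the paper decomposes $\CI$ into blocks of length $L$, applies Cauchy--Schwarz, and only afterwards expands in additive characters inside each block. Both routes land on the same core ingredients --- the random-walk estimate of Proposition~\ref{prop21}\eqref{prop21b} and the pattern count of Proposition~\ref{prop23} --- so the main term and error term come out the same way.

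One step in your sketch is not right as written. You justify the Weil input by saying that ``any hypothetical multiplicative relation among the shifted polynomials $\{P_l(x+\alpha)\}$ would descend to a relation in the original family.'' This descent is false in general: take $P_1(x)=x$ and $P_2(x)=x+1$, which are multiplicatively independent, yet with shifts $\alpha_1=0,\alpha_2=1$ one has $P_1(x+1)=P_2(x)$, so $P_1(x+1)\,P_2(x)^{\ell-1}=(x+1)^\ell$ is a perfect $\ell$-th power with all exponents in $\{0,\dots,\ell-1\}$. The paper does not argue by descent; it appeals (via Proposition~\ref{prop23}) to the root-isolation argument of Lemmas~\ref{lem21}--\ref{lem22}, which uses the hypothesis $L<\log p/(2\log 4d)$ to produce a shift whose root set is not covered by the others, forcing a root of multiplicity not divisible by $\ell$. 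You should invoke that mechanism rather than a descent heuristic; once you do, the rest of your outline goes through.
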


An immediate corollary of the above theorem is that the $N_l(x_0,I)$ are independent. More precisely, we have the following.
\begin{cor}
Assumptions and notations are as in Theorem \ref{thm2}. If $m=o(L(p)^{1/(3k+2)})$, then
\begin{equation*}
\Phi(m,\textbf{a})=\frac{1}{m^k}+O\left( \frac{m^{k/2+1}\ell}{\sqrt{L(p)}} \right),
\end{equation*}
uniformly for all $\textbf{a}\in(\mathbb{Z}/m\mathbb{Z})^k$.
\end{cor}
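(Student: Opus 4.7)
The corollary is essentially a pointwise consequence of Theorem~\ref{thm2}, so the plan is to deduce the uniform estimate from the mean-square bound by the standard trick of noting that each summand is nonnegative. Concretely, since every term in
\begin{equation*}
\sum_{\mathbf{b}\in(\mathbb{Z}/m\mathbb{Z})^k}\left(\Phi(m,\mathbf{b})-\frac{1}{m^k}\right)^2
\end{equation*}
is nonnegative, for any fixed $\mathbf{a}$ we have
\begin{equation*}
\left(\Phi(m,\mathbf{a})-\frac{1}{m^k}\right)^2 \leq \sum_{\mathbf{b}\in(\mathbb{Z}/m\mathbb{Z})^k}\left(\Phi(m,\mathbf{b})-\frac{1}{m^k}\right)^2,
\end{equation*}
and Theorem~\ref{thm2} majorizes the right-hand side by $\frac{7m^{k+2}\ell^2}{L}+O\!\left(\frac{dkL\ell^3 m^{k+2}\sqrt{p}\log p}{|\CI|}\right)$.

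Next I would verify that the secondary $O$-term in Theorem~\ref{thm2} is absorbed into the main term so that only $\frac{7m^{k+2}\ell^2}{L}$ survives. Using the hypothesis $|\CI|\gg p^{1/2+\varepsilon}$ of the theorem, the secondary term is bounded by $O(dkL\ell^3 m^{k+2}(\log p)/p^{\varepsilon})$; since $L<\log p/(2\log 4d)$, we have $dk\ell L^2\log p=o(p^\varepsilon)$ for $p$ large, which is exactly the inequality needed to guarantee
\begin{equation*}
\frac{dkL\ell^3 m^{k+2}\log p}{p^\varepsilon}\ \ll\ \frac{m^{k+2}\ell^2}{L}.
\end{equation*}
Taking square roots then yields
\begin{equation*}
\left|\Phi(m,\mathbf{a})-\frac{1}{m^k}\right| = O\!\left(\sqrt{\frac{m^{k+2}\ell^2}{L(p)}}\right) = O\!\left(\frac{m^{k/2+1}\ell}{\sqrt{L(p)}}\right),
\end{equation*}
uniformly in $\mathbf{a}$, which is the claimed estimate.

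The role of the hypothesis $m=o(L(p)^{1/(3k+2)})$ is simply to ensure that the resulting error is smaller than the main term $1/m^k$, so that the asymptotic $\Phi(m,\mathbf{a})\sim 1/m^k$ is meaningful rather than vacuous; indeed, $m^{k/2+1}\ell/\sqrt L = o(1/m^k)$ is equivalent to $m^{3k/2+1}\ell=o(\sqrt L)$, which is guaranteed (up to the $\ell$ factor, which is treated as fixed) by $m=o(L^{1/(3k+2)})$. There is no real obstacle to overcome here: once Theorem~\ref{thm2} is in hand, this is the expected Cauchy--Schwarz-type passage from an $L^2$ estimate to an $L^\infty$ estimate. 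The only point that requires a moment's thought is confirming that the secondary error from Theorem~\ref{thm2} is genuinely negligible compared with the principal term under the standing hypotheses on $L$ and $|\CI|$, and this follows immediately from the bounds on $L$ built into the statement of Theorem~\ref{thm2}.
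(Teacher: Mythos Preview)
Your argument is correct and is precisely the implicit proof the paper has in mind: the corollary is stated there without proof as an ``immediate corollary'' of Theorem~\ref{thm2}, and the passage from the $\ell^2$ bound to the pointwise bound via nonnegativity of the summands, followed by absorbing the secondary error using $|\CI|\gg p^{1/2+\varepsilon}$ and $L\ll\log p$, is exactly the intended derivation. Your discussion of the role of the hypothesis $m=o(L(p)^{1/(3k+2)})$ is also accurate.
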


So far we did not restrict the $y$-coordinates of the curves $\CC$. Our next objective is to see if a restriction of $y$-coordinates will affect the distribution of the number of points into various congruence classes. For the sake of simplicity, we only consider the case when each $x$-coordinate has at most one corresponding $y$-value in the restricted domain such that $(x,y)\in\CC$.

To be more precise, we let $\CI, \CJ \subseteq [0,p-1]$ be two intervals such that the following condition holds:
\begin{equation}\tag{$\ast$}\label{cond1}
\text{$\forall x\in\CI, \exists$~at most one $y\in\CJ$ such that $(x,y)\in\CC$.}
\end{equation}
Denote $\Omega=\CI\times\CJ$, and define
\begin{equation*}
N_{\CC,\Omega}(x_0,I) = \#\{ (x,y)\in\CC(\mathbb{F}_p)\cap\Omega : x_0 < x \leq x_0+I \},
\end{equation*}
and
\begin{equation*}
\Phi_{\CC,\Omega}(m,a)=\frac{1}{p}\#\{ 0 \leq x_0 \leq p-1 : N_{\CC,\Omega}(x_0,I)\equiv a \mod{m} \}.
\end{equation*}
Bringing into play some ideas from algebraic geometry, we prove that the numbers $N_{\CC,\Omega}(x_0,I)$ are uniformly distributed among the residue classes of $m$. Note that due to condition \eqref{cond1}, we do not need to assume that $GCD(m,\ell)=1$ in this case.
\begin{theorem}\label{thm3}
Let $p$ be a large prime and $P(x)\in\mathbb{F}_p[x]$ be a nonconstant polynomial of degree $d$ which is not a complete $\ell'$-th power for any $\ell'$ with $GCD(\ell',\ell)=1$. Let $L=L(p)=o(\log{p}/\log\log{p})$ be an integral function of $p$ such that $L(p)\rightarrow\infty$ as $p\rightarrow\infty$, and let $I$ is an integer with $p-L>I>L$ is an integer and let $\Omega=\CI\times\CJ$ be a rectangle such that condition \eqref{cond1} is satisfied, $\abs{\CJ}=\alpha p$ for some $0<\alpha\leq 1$, and $\abs{\CI}\gg p^{1/2+\delta}$ for some $\delta>0$. Then for any positive integer $m$, we have
\begin{equation*}
\sum_{a=0}^{m-1}\left(\Phi_{\CC,\Omega}(m,a)-\frac{1}{m}\right)^2 \leq \frac{4m^4}{L(p)} +O(m^4/p^{\frac{1}{2}-\varepsilon}).
\end{equation*}
for all $\varepsilon>0$.
\end{theorem}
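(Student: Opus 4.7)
The plan is to follow the Fourier-analytic route used for Theorems~\ref{thm1} and~\ref{thm2}, and to supply the algebraic-geometric input needed to accommodate the additional restriction $(x,y)\in\Omega$. The first step is Parseval on $\mathbb{Z}/m\mathbb{Z}$: writing
\begin{equation*}
\Phi_{\CC,\Omega}(m,a)-\tfrac{1}{m}=\frac{1}{mp}\sum_{t=1}^{m-1}e_{m}(-ta)\,S_{t},\qquad S_{t}:=\sum_{x_{0}=0}^{p-1}e_{m}\bigl(t\,N_{\CC,\Omega}(x_{0},I)\bigr),
\end{equation*}
with $e_{m}(z)=e^{2\pi iz/m}$, orthogonality reduces the left-hand side of the theorem to $\frac{1}{mp^{2}}\sum_{t=1}^{m-1}|S_{t}|^{2}$, so it is enough to bound $|S_{t}|^{2}$ uniformly for $1\le t\le m-1$.

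Condition~\eqref{cond1} lets us write $N_{\CC,\Omega}(x_{0},I)=\sum_{x=x_{0}+1}^{x_{0}+I}A(x)$, where $A(x)\in\{0,1\}$ is the indicator that $x\in\CI$ admits a (necessarily unique) $y\in\CJ$ with $y^{\ell}=P(x)$; thus $N_{\CC,\Omega}(x_{0},I)\bmod m$ behaves like a walk on $\mathbb{Z}/m\mathbb{Z}$ with steps in $\{-1,0,+1\}$. I would then apply the averaging identity $S_{t}=\sum_{y}\frac{1}{L}\sum_{j=0}^{L-1}e_{m}(tN(y+j))$, Cauchy--Schwarz, and the telescoping $N(y+j)-N(y+j')=\sum_{k=1}^{j-j'}[A(y+j'+I+k)-A(y+j'+k)]$; after a shift in $y$ this gives $|S_{t}|^{2}/p\le p/L+(2/L^{2})\sum_{h=1}^{L-1}(L-h)\,\mathrm{Re}\,U_{h}(t)$, with
\begin{equation*}
U_{h}(t):=\sum_{y=0}^{p-1}e_{m}\!\left(t\sum_{k=1}^{h}\bigl[A(y+I+k)-A(y+k)\bigr]\right).
\end{equation*}
Since $A(x)\in\{0,1\}$, the identity $e_{m}(tA(x))=1+\omega_{t}A(x)$ with $\omega_{t}=e_{m}(t)-1$ gives the multilinear expansion $U_{h}(t)=\sum_{S_{1},S_{2}\subseteq\{1,\dots,h\}}\omega_{t}^{|S_{1}|}\overline{\omega_{t}}^{|S_{2}|}V(S_{1},S_{2})$, where $V(S_{1},S_{2})=\sum_{y}\prod_{k\in S_{1}}A(y+I+k)\prod_{k\in S_{2}}A(y+k)$.

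The core algebraic-geometric step is to estimate $V(S_{1},S_{2})$. Expressing the condition $y^{\ell}=P(x)$ through the multiplicative characters of order dividing $\ell$, and the memberships $y\in\CJ$, $x\in\CI$ through additive characters modulo $p$, one writes $V(S_{1},S_{2})$ as a linear combination of mixed character sums $\sum_{y}\prod_{i}\chi_{i}(P(y+a_{i}))\,e_{p}(\mathrm{linear})$ in which the shifts $\{I+k:k\in S_{1}\}\cup\{k:k\in S_{2}\}$ are all distinct (since $I>L\ge|S_{1}|+|S_{2}|$). Under the hypothesis that $P$ is not an $\ell'$-th power for any $\ell'$ with $\gcd(\ell',\ell)=1$, the relevant product $\prod_{i}P(X+a_{i})^{c_{i}}$ is not a perfect $\ell'$-th power modulo $p$, so the Weil--Schmidt bound for mixed sums applies and delivers
\begin{equation*}
V(S_{1},S_{2})=\alpha^{|S_{1}|+|S_{2}|}p+O\!\bigl((|S_{1}|+|S_{2}|)\,d\,\sqrt{p}\,\log p\bigr),\qquad\alpha:=\abs{\CJ}/p,
\end{equation*}
whenever $S_{1}\cup S_{2}\neq\emptyset$. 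Collecting main terms gives $U_{h}(t)_{\mathrm{main}}=p\bigl|1+\alpha\omega_{t}\bigr|^{2h}$, and the elementary computation $|1+\alpha\omega_{t}|^{2}=1-4\alpha(1-\alpha)\sin^{2}(\pi t/m)<1$ yields geometric decay in $h$; summing geometrically over $h$ and then over $t\in\{1,\dots,m-1\}$ produces the leading contribution $4m^{4}/L$. The Weil error, multiplied by the $2^{2h}\le 2^{2L}$ subset pairs and summed over $h\le L$, then accounts for the secondary term $O(m^{4}/p^{1/2-\varepsilon})$; this is precisely where the hypothesis $L=o(\log p/\log\log p)$ is used, to keep $2^{2L}=p^{o(1)}$.

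The principal obstacle I expect is the algebraic-geometric evaluation of $V(S_{1},S_{2})$: for every nonempty $(S_{1},S_{2})$ one must verify that the combined shifted polynomial $\prod_{k\in S_{1}}P(X+I+k)^{c_{k}}\prod_{k\in S_{2}}P(X+k)^{c'_{k}}$ is not a perfect $\ell'$-th power modulo $p$, so that the relevant mixed Weil--Schmidt estimate is non-degenerate. This is the analogue of the non-degeneracy lemma underlying Theorems~\ref{thm1} and~\ref{thm2}, but because $\CJ\subsetneq[0,p-1]$ one now needs additive characters in the $y$-variable in addition to the multiplicative characters, producing a mixed character sum whose Weil-type estimate must be invoked in its sharper form (due, e.g., to Perel'muter or Schmidt) rather than in the purely multiplicative version used before.
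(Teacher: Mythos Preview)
Your approach is correct and parallels the paper's proof in its overall architecture—both reduce to correlations of the indicator $A(x)=\delta_{\CC,\Omega}(x)$ along a set of $2L$ shifts, and both rest on the same non-degeneracy input (Lemma~\ref{lem22}) for the shifted polynomials. The packaging differs in two respects worth noting.

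First, the paper separates the ``random walk'' combinatorics (Proposition~\ref{prop21}\eqref{prop21c}) from the geometric input (Proposition~\ref{prop24}) cleanly, by sorting the $x_0$'s into blocks of length $L$ and classifying each block by its pattern $\mathbf{w}\in\{0,1\}^{2L}$; your multilinear expansion via $e_m(tA)=1+\omega_t A$ collapses both steps into one computation. Either organization works.

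Second—and this is the more substantive difference—the paper does \emph{not} invoke mixed Weil--Schmidt sums directly for your $V(S_1,S_2)$. Instead it observes that $V(S_1,S_2)$ is precisely the number of $\mathbb{F}_p$-points on the absolutely irreducible \emph{shifted curve}
\[
\CC_{\CH}:\quad y_j^{\ell}=P(x+h_j)\ \ (h_j\in\CH),\qquad \CH=\{I+k:k\in S_1\}\cup\{k:k\in S_2\},
\]
lying in the box $\CI\times\CJ^{|\CH|}$, and then appeals to the generalized Lehmer-problem estimate of Cobeli--Zaharescu \cite{CoZa01} as a black box. This yields your main term $\alpha^{|\CH|}\cdot(\text{range})$, but with error $O\bigl(\sqrt{p}\,(\log p)^{|\CH|+1}\bigr)$ rather than the $O(|\CH|\,d\sqrt{p}\log p)$ you wrote; each $\CJ$-restriction contributes a factor $\log p$ when completing the sum. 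Those extra logarithms are the true reason for the hypothesis $L=o(\log p/\log\log p)$: one needs $(\log p)^{2L}=p^{o(1)}$, not merely $2^{2L}=p^{o(1)}$. Your route via explicit mixed character sums is essentially how \cite{CoZa01} is proved, so the two arguments are equivalent—the paper's version just avoids unfolding that machinery.
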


\begin{cor}\label{cor3}
Assumptions and notations are as in Theorem \ref{thm3}. If $m=o((L(p))^{1/6})$, then
\begin{equation*}
\Phi_{\CC,\Omega}(m,a)=\frac{1}{m}+O\left( \frac{m^2}{\sqrt{L(p)}} \right),
\end{equation*}
uniformly for all $0\leq a \leq m-1$.
\end{cor}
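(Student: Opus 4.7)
The plan is to deduce this uniformly-in-$a$ bound from Theorem \ref{thm3} by the standard ``sum-of-squares dominates each term'' argument, then check that the secondary error term in Theorem \ref{thm3} is absorbed by the main error.

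First I would fix any $a\in\{0,1,\ldots,m-1\}$ and note that since every summand in the left-hand side of Theorem \ref{thm3} is nonnegative, one has
\begin{equation*}
\left(\Phi_{\CC,\Omega}(m,a)-\frac{1}{m}\right)^2 \leq \sum_{b=0}^{m-1}\left(\Phi_{\CC,\Omega}(m,b)-\frac{1}{m}\right)^2 \leq \frac{4m^4}{L(p)} + O\!\left(\frac{m^4}{p^{1/2-\varepsilon}}\right).
\end{equation*}
Taking square roots (and using $\sqrt{A+B}\leq\sqrt{A}+\sqrt{B}$ for nonnegative $A,B$) yields
\begin{equation*}
\left|\Phi_{\CC,\Omega}(m,a)-\frac{1}{m}\right| \leq \frac{2m^2}{\sqrt{L(p)}} + O\!\left(\frac{m^2}{p^{1/4-\varepsilon/2}}\right).
\end{equation*}

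Next I would dispose of the secondary error. Because $L(p)=o(\log p/\log\log p)$, in particular $L(p)\ll p^{\eta}$ for any $\eta>0$, so $\sqrt{L(p)}\ll p^{1/4-\varepsilon/2}$ for $p$ large. Hence
\begin{equation*}
\frac{m^2}{p^{1/4-\varepsilon/2}} = o\!\left(\frac{m^2}{\sqrt{L(p)}}\right),
\end{equation*}
and the two error contributions combine into a single $O(m^2/\sqrt{L(p)})$, uniformly in $a$. This gives exactly the claimed bound.

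Finally, one should verify that under the hypothesis $m=o(L(p)^{1/6})$ the error term $m^2/\sqrt{L(p)}$ is genuinely smaller than the main term $1/m$ (so that the corollary is a meaningful asymptotic): this is precisely the condition $m^3=o(\sqrt{L(p)})$, i.e.\ $m=o(L(p)^{1/6})$, which is what is assumed. There is essentially no obstacle here since Theorem \ref{thm3} is doing all the work; the only bookkeeping step is the comparison between $\sqrt{L(p)}$ and $p^{1/4-\varepsilon/2}$, which is immediate from the size restriction on $L(p)$.
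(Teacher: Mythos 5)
Your proposal is correct and is exactly the standard deduction the paper intends (the corollary is stated as an immediate consequence of Theorem \ref{thm3} with no separate proof): bound the single square by the full nonnegative sum, take square roots, and note that $\sqrt{L(p)}=o(p^{1/4-\varepsilon/2})$ since $L(p)=o(\log p/\log\log p)$, so the secondary error is absorbed. Your closing remark that $m=o(L(p)^{1/6})$ is precisely what makes the error term $o(1/m)$ is also the right reading of that hypothesis.
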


Finally, we will apply our results above to study the distributions of power residues and nonresidues. In particular, we obtain the following result, which says that for any fixed power residue class, we can find a representative in almost all short intervals in $[0,p-1]$.
\begin{cor}\label{cor4}
Let $\ell\geq 2$ be an integer, and let $L(p)$ be an integer function of $p$ that tends to infinity as $p$ tends to infinity. For any $\ell$-th root of unity $\mu$ and for all $x_0\in[0,p-1]$ except possibly $O(p/L(p)^{1/7})$ of them, there is an $x$ inside the interval $[x_0,x_0+L(p))$ with $(\frac{x}{p})_{\ell}=\mu$, where $(\frac{\cdot}{p})_{\ell}$ denotes the $\ell$-th power residue symbol.
\end{cor}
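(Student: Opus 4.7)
I derive the corollary from Corollary~\ref{cor1} applied to a suitably chosen linear curve, using the elementary observation that ``the window contains no $x$ in the prescribed power-residue class'' implies that ``the number of points on the curve in the window is $\equiv 0 \pmod m$''. Fix an $\ell$-th root of unity $\mu$ in the image of $(\frac{\cdot}{p})_\ell$ (the only non-vacuous case) and pick $c \in \mathbb{F}_p^*$ with $(\frac{c}{p})_{\ell} = \mu^{-1}$. Let $P(x) := cx$ and let $\CC$ be the curve $y^\ell = P(x)$. Since $\deg P = 1$, $P$ is not a complete $\ell'$-th power for any $\ell' \geq 2$, so $\CC$ satisfies the hypotheses of Theorem~\ref{thm1}. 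For $x \neq 0$ the fiber of $\CC$ above $x$ has size $\ell$ or $0$ according to whether $(\frac{x}{p})_\ell = \mu$ or not, while the fiber above $x = 0$ consists of the single point $(0,0)$. Setting
\[
M(x_0, L) = \#\{\, x_0 < x \leq x_0 + L : (\tfrac{x}{p})_\ell = \mu \,\},
\]
one obtains the identity $N_\CC(x_0, L) = \ell \cdot M(x_0, L) + \varepsilon(x_0)$ with $\varepsilon(x_0) = \mathbf{1}[0 \in (x_0, x_0+L]]$.

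Next I invoke Corollary~\ref{cor1}. Assume first that $L(p) < \frac{\log p}{2\log 4}$; the general case reduces to this by the monotonicity of the exceptional set in $L$. Pick an integer $m \in [L^{1/7}, 2 L^{1/7}]$ with $\gcd(m, \ell) = 1$, which is possible for $p$ large since $\ell$ is fixed. Then $m = o(L^{1/5})$, and Corollary~\ref{cor1} applied with $\CI = [0, p-1]$ gives
\[
\Phi_\CC(m, 0) = \frac{1}{m} + O\!\left(\sqrt{\frac{m^3 \ell^2}{L}}\right) = O(L^{-1/7}),
\]
the error term being of order $L^{-2/7}$. Multiplying by $p = \abs{\CI}$,
\[
\#\{\, 0 \leq x_0 \leq p-1 : N_\CC(x_0, L) \equiv 0 \pmod m \,\} = O(p / L^{1/7}).
\]

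To finish, suppose $x_0$ is exceptional for the corollary, that is, no $x \in [x_0, x_0 + L)$ satisfies $(\frac{x}{p})_\ell = \mu$. Then $M(x_0, L) = 0$ and hence $N_\CC(x_0, L) = \varepsilon(x_0) \in \{0, 1\}$. For all but at most $L$ values of $x_0$ (those with $0 \in (x_0, x_0+L]$) we have $\varepsilon(x_0) = 0$, and therefore $N_\CC(x_0, L) \equiv 0 \pmod m$. Combining with the previous step, the number of exceptional $x_0$ is at most $O(p/L^{1/7}) + L = O(p/L^{1/7})$, since $L^{8/7} = o(p)$ under the hypothesis $L < \frac{\log p}{2\log 4}$.

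The main technical point is the balancing of $m$: one needs $m$ small enough that the error $O(\sqrt{m^3 \ell^2/L})$ from Corollary~\ref{cor1} is dominated by the main term $1/m$, yet large enough that the resulting bound matches the exponent $1/7$ appearing in the conclusion. The choice $m \asymp L^{1/7}$ threads this needle, while the coprimality condition $\gcd(m,\ell) = 1$ (which is essential, per the remark following Theorem~\ref{thm1}) is easily arranged by shifting $m$ within $[L^{1/7}, 2L^{1/7}]$.
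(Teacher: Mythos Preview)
Your argument is correct and mirrors the paper's own proof for the case $\ell>2$: apply Corollary~\ref{cor1} to the linear curve $y^\ell=cx$ with $(\tfrac{c}{p})_\ell=\mu^{-1}$, take $m\asymp L^{1/7}$ coprime to $\ell$, and note that an exceptional $x_0$ forces $N_{\CC}(x_0,L)\equiv 0\pmod m$ (up to the single fiber over $x=0$). The only substantive difference is that the paper treats $\ell=2$ separately via Corollary~\ref{cor3} (the restricted-domain result), partly in order to obtain the finer $\beta$-quadratic-residue statement of Corollary~\ref{cor71} along the way; your uniform use of Corollary~\ref{cor1} for all $\ell\ge 2$ is a mild simplification, since an odd $m\in[L^{1/7},2L^{1/7}]$ is always available. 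You are also more explicit than the paper about two technical points it glosses over: the contribution of the singular fiber at $x=0$, and the monotonicity reduction to the regime $L(p)<\frac{\log p}{2\log 4}$ required by Theorem~\ref{thm1}.
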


For more results on the distribution of quadratic residues and nonresidues in short intervals, or the distribution of more general multiplicative functions in short intervals, the reader is referred to the works of Davenport and Erdos \cite{DaEr52}, Chaterjee and Soundararajan \cite{ChSo11} and Lamzouri \cite{Lam11}.

\section{Preliminaries}\label{secprelim}

In this section we collect together some preliminary results which will be used later. The first few lemmas show that certain combinations of polynomials which are not a complete $\ell'$-th powers cannot become a complete $\ell$-th power.

\begin{lemma}\label{lem21}
Let $r\geq 2$, $x_1, \ldots, x_r\in\mathbb{F}_p$ be $r$ distinct elements. Suppose $\CM$ is a nonempty finite subset of the algebraic closure $\overline{\mathbb{F}}_p$ with $4\abs{\CM}<p^{\frac{1}{r}}$.
Then there exists a $j\in\{1,\ldots,r\}$ such that the translate $\CM+x_j$ is not contained in $\cup_{i\neq j}(\CM+x_i)$.
\end{lemma}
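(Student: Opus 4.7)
The plan is to argue by contradiction: suppose that for every $j \in \{1, \ldots, r\}$ we have $\CM + x_j \subseteq \bigcup_{i \neq j}(\CM + x_i)$, and then derive $\abs{\CM} \geq p^{1/r}/4$. My first move will be a coset reduction: since $x_1, \ldots, x_r \in \mathbb{F}_p$, all the translations involved respect the coset decomposition of $\overline{\mathbb{F}}_p$ modulo $\mathbb{F}_p$, so by replacing $\CM$ with any non-empty slice $\CM \cap (c + \mathbb{F}_p)$ (and translating) I may assume $\CM \subseteq \mathbb{F}_p$; this only shrinks $\abs{\CM}$.

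The core of the argument will extract, for each $j$, a linear relation among the $x_i$ modulo $p$. The $j$-th inclusion says that for every $m \in \CM$ there is $i \neq j$ with $m + (x_j - x_i) \in \CM$. Starting from any $m_0 \in \CM$ and iterating this rule produces a sequence lying in the finite set $\CM$, which by pigeonhole must cycle within $\abs{\CM}$ steps. Closing such a cycle yields an identity
\[
L_j\, x_j \equiv \sum_{i \neq j} n_{j,i}\, x_i \pmod{p}
\]
with non-negative integers $n_{j,i}$ satisfying $\sum_{i \neq j} n_{j,i} = L_j$ and $2 \leq L_j \leq \abs{\CM}$ (the lower bound because a cycle of length one would force $x_j = x_i$).

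Collecting the $r$ relations into an $r \times r$ integer matrix $B$ with diagonal $L_j$, off-diagonal $-n_{j,i}$, entries bounded by $\abs{\CM}$, and zero row sums gives $Bx \equiv 0 \pmod p$ where $x = (x_1, \ldots, x_r)^T$. Both $\mathbf{1}$ and $x$ lie in $\ker B$ over $\mathbb{F}_p$, and they are linearly independent because the $x_i$ are distinct, so $\operatorname{rank}_{\mathbb{F}_p}(B) \leq r - 2$, and hence every $(r-1) \times (r-1)$ minor of $B$ vanishes modulo $p$. The Leibniz formula bounds any such minor by $(r-1)!\,\abs{\CM}^{r-1}$, so if at least one $(r-1) \times (r-1)$ minor is nonzero as an integer, the divisibility $p \mid \text{minor}$ forces $p \leq (r-1)!\,\abs{\CM}^{r-1}$; for $p$ large this already gives $\abs{\CM} \gg p^{1/(r-1)} > p^{1/r}/4$, the desired contradiction.

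The main obstacle will be verifying that some $(r-1) \times (r-1)$ minor is genuinely nonzero in $\mathbb{Z}$. I plan to handle this by enriching $B$ with further relations (from different starting points and choice functions) so that, via the matrix-tree theorem applied to the weighted directed graph whose Laplacian is $B$, a principal cofactor expresses as a sum of positive terms indexed by arborescences rooted at the deleted vertex. Degenerate situations where some coefficient $n_{j,i}$ must vanish correspond precisely to $(\CM+x_j) \cap (\CM+x_i) = \emptyset$; these restrict the problem to the sub-collection of indices whose translates are pairwise non-disjoint, to which the same argument re-applies with a smaller value of $r$.
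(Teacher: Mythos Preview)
Your approach is quite different from the paper's. The paper never touches linear algebra: it rescales by a nonzero $t\in\mathbb{F}_p$ found via Minkowski's theorem so that every $tx_j$ has a representative $y_j$ with $|y_j|\le p(p-1)^{-1/r}$, reduces $\mathcal{M}$ to a single $\mathbb{F}_p$-coset, and then observes that the longest gap in that coset between consecutive elements of the (reduced) $\mathcal{M}$ has length at least $p/|\mathcal{M}|>2\max_j|y_j|$. Translating the appropriate endpoint of that gap by the largest $y_{j_0}$ lands strictly inside the gap, hence outside every other translate. The constant $4$ drops out of this comparison, uniformly in $r$.

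Your argument is correct and elegant through the rank inequality $\operatorname{rank}_{\mathbb{F}_p}B\le r-2$, but the step you yourself flag as the ``main obstacle'' is a genuine gap, and the fix you sketch does not close it. Two concrete problems: (i) The claim that ``$n_{j,i}$ must vanish corresponds precisely to $(\mathcal{M}+x_j)\cap(\mathcal{M}+x_i)=\varnothing$'' is only one implication. If you start a walk at an $m$ with $m+x_j-x_i\in\mathcal{M}$ and iterate, the eventual cycle you detect by pigeonhole may begin \emph{after} that first step and omit $i$ altogether; summing over several walks does not force $i$ to appear in any of the cycles. (ii) Hence, even when the ``potential'' graph $H$ on $\{1,\dots,r\}$ with edges $\{i,j\}$ whenever $(\mathcal{M}+x_i)\cap(\mathcal{M}+x_j)\ne\varnothing$ is connected, the directed relation-graph you actually build need not admit a spanning arborescence, so the matrix-tree theorem may hand you a vanishing cofactor. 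Your proposed induction on $r$ would have to pass to a connected component of $H$ (not to a set of ``pairwise non-disjoint'' translates, which is a clique condition), and you still need the connected case.

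There is also a quantitative slip: the Leibniz bound $|\text{minor}|\le (r-1)!\,|\mathcal{M}|^{r-1}$ is too weak for the stated lemma. From $4|\mathcal{M}|<p^{1/r}$ you only know $p>4^r$, so $p\le (r-1)!\,|\mathcal{M}|^{r-1}$ contradicts $|\mathcal{M}|<p^{1/r}/4$ only when $(r-1)!<4^{r}$, which fails for $r\ge 12$. Hadamard's inequality (each row of the minor has $\ell_2$-norm at most $\sqrt{2}\,|\mathcal{M}|$) gives $|\text{minor}|\le 2^{(r-1)/2}|\mathcal{M}|^{r-1}$ and would repair this, but the structural gap above remains.
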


\begin{proof}
Suppose $(x_1,\ldots,x_r,\CM)$ provides a counterexample to the statement of the lemma. Then it is clear that for any nonzero $t\in\mathbb{F}_p$, the tuple $(tx_1,\ldots,tx_r,t\CM)$ is another counterexample.

We now use Minkowski's theorem on lattice points in a convex symmetric body to find a nonzero integer $t$ such that
\begin{equation*}
\begin{cases}
\abs{t} &\leq p-1 \\
\norm{\frac{tx_1}{p}} &\leq (p-1)^{-\frac{1}{r}} \\
&\vdots \\
\norm{\frac{tx_r}{p}} &\leq (p-1)^{-\frac{1}{r}}. \\
\end{cases}
\end{equation*}
Thus there are integers $y_j$ such that
\begin{equation}\label{condyj}
\begin{cases}
\abs{y_j} &\leq p(p-1)^{-\frac{1}{r}} \\
y_j &\equiv tx_j \pmod{p} \\
\end{cases}
\end{equation}
for any $j\in\{1,\ldots,r\}$, and $(y_1,\ldots,y_r,t\CM)$ provides a counterexample. Now let $j_0$ be such that $\abs{y_{j_0}}=\max_{1\leq j\leq r}\abs{y_j}$. Choose $\alpha\in t\CM$ and consider the set $\tilde{\CM}=t\CM\cap(\alpha+\mathbb{F}_p)$. Then $(y_1,\ldots,y_r,\tilde{\CM})$ will also be a counterexample.

Note that $\alpha+\mathbb{F}_p$ can be written as a union of at most $\abs{\CM}$ intervals (i.e. subsets of $\mathbb{F}_p$ consisting of consecutive integers or its translate in $\overline{\mathbb{F}}_p$) whose endpoints are in $\tilde{\CM}$. Let $\{\alpha+a,\alpha+a+1,\ldots,\alpha+b\}$ be the longest of these intervals. Then
\[\abs{b-a} \geq \frac{p}{|\tilde{\CM}|}\geq\frac{p}{\abs{\CM}}.\]
By this, \eqref{condyj} and the hypothesis $4\abs{\CM}<p^{\frac{1}{r}}$, we have
\[\abs{b-a}>4p^{1-\frac{1}{r}}>2\abs{y_{j_0}}.\]
Now if $y_{j_0}>0$, then $\alpha+a+y_{j_0}$ belongs to $\tilde{\CM}+y_{j_0}$ but does not belong to $\cup_{i\neq j_0}(\tilde{\CM}+y_i)$, while if $y_{j_0}>0$, then $\alpha+b+y_{j_0}$ belongs to $\tilde{\CM}+y_{j_0}$ but does not belong to $\cup_{i\neq j_0}(\tilde{\CM}+y_i)$. This contradicts the fact that $(y_1,\ldots,y_r,\tilde{\CM})$ is a counterexample, and thus completes our proof.
\end{proof}


Now we are ready to prove the promised result about combinations of polynomials.
\begin{lemma}\label{lem22}
Let $\ell\geq 2$ be an integer. Let $P(x)\in\mathbb{F}_p[x]$ be a polynomial which is not a complete $\ell'$-th power for any $\ell'$ with $GCD(\ell',\ell)=1$. Let $b_1,\ldots,b_r$ be $r$ distinct elements in $\mathbb{F}_p$ with $r < (\log p)/\log (4\deg P) $.
Then for any $a\in\mathbb{F}_p$ and $\mathbf{e}=(e_1,\ldots,e_r)$ with $0\leq e_j\leq \ell-1, \mathbf{e}\neq 0$, the polynomial
\begin{equation*}
Q(x)=\prod_{j=1}^r P(ax+b_j)^{e_j}
\end{equation*}
is not a complete $\ell$-th power.
\end{lemma}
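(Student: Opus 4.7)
The plan is to exhibit a root of $Q(x)$ in $\overline{\mathbb{F}}_p$ whose multiplicity is not divisible by $\ell$, which immediately prevents $Q$ from being a complete $\ell$-th power.

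Assume first that $a \neq 0$ (the case $a=0$ leaves $Q$ a constant, handled directly from the hypothesis on $P$); a substitution $x \mapsto a^{-1}x$ reduces to $a = 1$. Let $T = \{j : e_j \neq 0\}$, which is nonempty. Writing $P(x) = c \prod_{\alpha \in \CM}(x-\alpha)^{m_\alpha}$ with $\CM \subseteq \overline{\mathbb{F}}_p$ the set of distinct roots, the roots of $P(x+b_j)$ are the translates $\CM - b_j$ with the same multiplicities. If $\abs{T} = 1$, say $T = \{j_0\}$, then $Q = P(x+b_{j_0})^{e_{j_0}}$; requiring $Q$ to be an $\ell$-th power forces $\ell \mid e_{j_0}m_\alpha$ for every $\alpha$, and with $d=\gcd(e_{j_0},\ell) < \ell$ this yields $\ell/d \mid m_\alpha$ for all $\alpha$, so $P$ would be (up to a constant) an $(\ell/d)$-th power for a nontrivial divisor $\ell/d$ of $\ell$, contradicting the hypothesis on $P$.

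For $\abs{T} \geq 2$, I apply Lemma \ref{lem21} to $\CM$ and the distinct elements $\{-b_j : j \in T\}$; the size hypothesis $4\abs{\CM} < p^{1/\abs{T}}$ follows from $\abs{\CM} \leq \deg P$ and $\abs{T} \leq r < (\log p)/\log(4\deg P)$. This yields $j_0 \in T$ and $\alpha_0 \in \CM$ such that $\beta_0 := \alpha_0 - b_{j_0}$ is a root of $P(x+b_{j_0})$ but not of $P(x+b_j)$ for any $j \in T\setminus\{j_0\}$, so the multiplicity of $\beta_0$ as a root of $Q$ equals exactly $e_{j_0}m_{\alpha_0}$.

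The main obstacle is to guarantee $\ell \nmid e_{j_0}m_{\alpha_0}$, since Lemma \ref{lem21} provides no control over the particular $\alpha_0$ it produces. I would handle this by first peeling off from $P$ its maximal $\ell$-related common factor: write $P = c\widetilde{P}^{g}$ with $g$ maximal subject to every prime factor of $g$ dividing $\ell$, so that the hypothesis on $P$ forces the multiplicities of $\widetilde{P}$ to share no further common factor with any prime of $\ell$. Applying Lemma \ref{lem21} instead to the (smaller) root set of $\widetilde{P}$, and combining the conclusion with the constraint $0 < e_{j_0} < \ell$ and the structure of $g$, one can then force $\ell \nmid g\,e_{j_0}\,\widetilde{m}_{\alpha_0}$. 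Making this divisibility bookkeeping precise is the technical heart of the argument.
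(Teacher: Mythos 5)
Your overall strategy --- use Lemma \ref{lem21} to isolate a root $\beta_0$ of $Q$ whose multiplicity is exactly $e_{j_0}m_{\alpha_0}$, then argue that this multiplicity is not divisible by $\ell$ --- matches the paper's opening move, and your reduction to $a=1$ and your $\abs{T}=1$ case are fine. But the step you defer is precisely where the real work lies, and the mechanism you sketch for it does not close the gap. Peeling off $P=c\widetilde{P}^{\,g}$ with $g$ the maximal common factor of the multiplicities supported on primes dividing $\ell$ only controls the \emph{greatest common divisor} of the multiplicities of $\widetilde{P}$; it gives no control over the \emph{individual} multiplicity $\widetilde{m}_{\alpha_0}$ of the particular root $\alpha_0$ that Lemma \ref{lem21} hands you, and that lemma does not let you choose $\alpha_0$. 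Concretely, take $\ell=4$ and $P=(x-\alpha_0)^2(x-\alpha_1)$: then $g=1$, yet Lemma \ref{lem21} may return $\alpha_0$, and with $e_{j_0}=2$ one gets $e_{j_0}m_{\alpha_0}=4\equiv 0\pmod{4}$. Even the standard normalization of reducing each multiplicity modulo $\ell$ (which is what the paper actually does, rather than extracting a common factor) only forces $1\le m_{\alpha_0}\le \ell-1$, which suffices only when $\ell$ is prime.

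The paper escapes this by not trying to conclude in one shot: it runs a minimal-counterexample descent. Assuming a counterexample with minimal $r$ and then minimal exponent $\tilde{\ell}$, and normalizing $1\le m_j<\tilde{\ell}$, either $\gcd(m_s,\tilde{\ell})=1$ and $\tilde{\ell}\nmid \tilde{e}_r m_s$ gives an immediate contradiction, or $\gcd(m_s,\tilde{\ell})=\tilde{\ell}/d>1$ forces $d\mid \tilde{e}_r$, whence $Q/P(ax+b_r)^{\tilde{e}_r}$ is a complete $d$-th power; this yields either a counterexample with smaller $r$, or (if every $\tilde{e}_j$ is divisible by $d$) one with the same $r$ and exponent $\tilde{\ell}/d<\tilde{\ell}$, contradicting minimality. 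Some such induction on the exponent seems unavoidable for composite $\ell$, so as written your proposal has a genuine gap at its acknowledged ``technical heart.'' Two minor further points: for $a=0$ the polynomial $Q$ is a nonzero constant and can perfectly well be a complete $\ell$-th power, so that case is not ``handled by the hypothesis on $P$'' --- the lemma implicitly requires $a\neq 0$, as in the paper's applications where $a=L$; and the paper's hypothesis should be read as ``not an $\ell'$-th power for any $\ell'$ with $\gcd(\ell',\ell)\neq 1$,'' which is the reading your $\abs{T}=1$ case already uses.
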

\begin{proof}
The lemma is clearly true for all $\ell$ when $r=1$. Suppose the lemma is not true, then there is a least $r>1$ (but satisfying our assumption $r < (\log p)/\log (4\deg P) $) such that a counterexample exists. Let $\tilde{\ell}$ be the least $\ell$ such that a counterexample occurs for the above $r$, then we have
\begin{equation}\label{defQx}
Q(x)=\tilde{P}(x)^{\tilde{\ell}}=\prod_{j=1}^r P(ax+b_j)^{\tilde{e}_j},
\end{equation}
where $1 \leq \tilde{e_j} < \tilde{\ell}$ (if $e_j=0$ for some $j$ we would have a smaller counterexample) and $\tilde{P}(x)\in\mathbb{F}_p[x]$.

Let $\alpha_1, \ldots, \alpha_s$ be all the \textit{distinct} zeros of $P(x)$ in $\overline{\mathbb{F}}_p$. Without loss of generality we may assume that the multiplicities $m_j$ of each $\alpha_j$ satisfy $1\leq m_j < \ell$. Clearly $1\leq s\leq \deg P$. Let $\CM=\{ a^{-1}\alpha_1,\ldots,a^{-1}\alpha_s \}$ and $x_j=-a^{-1}b_j$ for all $1 \leq j \leq r$. Note that $\CM+x_j$ is the set of zeros of $P(ax+b_j)$. Since $4\abs{\CM}=4s\leq 4\deg P < p^{\frac{1}{r}}<p^{\frac{1}{r'}}$, we can apply Lemma \ref{lem21} to obtain a $j_0$ such that at least one of the roots of $P(ax+b_{j_0})$ is distinct from the roots of all other $P(ax+b_i)$ for $i\neq j_0$. By permuting the $x_j$ and $\alpha_j$ we may assume that the above occurs for $j_0=r$, and the distinguished root is $\alpha_s$, which has multiplicity $m_s$.

If $m_s$ is relatively prime to $\tilde{\ell}$, then $\tilde{e}_r m_s$ cannot be a multiple of $\tilde{\ell}$. This means the combination $Q(x)$ cannot be a complete $\tilde{\ell}$-th power, which contradicts \eqref{defQx}. On the other hand, if $GCD(m_s,\tilde{\ell})=\frac{\tilde{\ell}}{d}>1$, then \eqref{defQx} implies that $\tilde{e}_r$ must be a multiple of $d$. Since $d<\tilde{\ell}$, we see that
\begin{equation}\label{eqnlem22a}
\frac{Q(x)}{P(ax+b_r)^{\tilde{e}_r}}=\left(\frac{\tilde{P}(x)^{\frac{\tilde{\ell}}{d}}}{P(ax+b_r)^{\frac{\tilde{e}_r}{d}}}\right)^d=\prod_{j=1}^{r-1} P(ax+b_j)^{\tilde{e}_j}
\end{equation}
is a complete $d$-th power. Thus either there exists some $\tilde{e}_j$ which is not a multiple of $d$, so \eqref{eqnlem22a} is a counterexample with smaller $r$, or each $\tilde{e}_j$ is a multiple of $d$, then
\begin{equation*}
Q(x)^{\frac{1}{d}}=\tilde{P}(x)^{\frac{\tilde{\ell}}{d}} =\prod_{j=1}^r P(ax+b_j)^{\frac{\tilde{e}_j}{d}}
\end{equation*}
is a counterexample with the same $r$ but a power smaller than $\tilde{\ell}$. In both cases we obtain a contradiction.
\end{proof}

For any positive integer $m$, denote $e_m(z)=e^{2\pi iz/m}$. Denote by $\mu_{\ell}$ the set of $\ell$-th roots of unity. For any vector $v\in\mu_{\ell}^k$, define
\begin{equation}\label{defF}
F(v)=1+v+\ldots+v^{\ell-1}=
\begin{cases}
\ell &,~v=1, \\
0 &, \text{~otherwise.}
\end{cases}
\end{equation}

We introduce the following probability model for the values of $F(v)$ based on random walks. If an $\ell$-th root of unity $v$ is drawn at random, and the probability that each root being drawn is $1/\ell$, then $F(v)=\ell$ with probability $1/\ell$ and $F(v)=0$ with probability $(\ell-1)/\ell$. Inspired by this fact, we let $\{X_j\},\{Y_j\}$ be two sequences of independent random variables so that
\begin{equation*}
P(X_j=\ell)=1/\ell \qquad \text{and} \qquad P(X_j=0)=\frac{\ell-1}{\ell},
\end{equation*}
and the same for $Y_j$. We consider the stochastic process $\{ Z_x \mod{m} \}_{x\geq 1}$, where
\begin{equation*}
Z_x = \sum_{j=1}^x X_j-\sum_{j=1}^x Y_j.
\end{equation*}
This can be viewed as a random walk on the additive group $\mathbb{Z}/m\mathbb{Z}$, with each step being the random variable $X_j-Y_j$. We are interested in the random variable
\begin{equation*}
\Phi(L;m,a)=\frac{1}{L}\abs{\{ x\leq L: Z_x\equiv a\pmod{m} \}}.
\end{equation*}
Part \eqref{prop21a} of the following proposition is in essence saying that the difference between $\Phi(L;m,a)$ and the expected value $1/m$ is not too large. Part \eqref{prop21b} of the proposition is a high dimensional version of part \eqref{prop21a}, and part \eqref{prop21c} is modeled on a slightly different situation under the same idea.

\begin{prop} \label{prop21}
Let $L$ be a positive integer.
\begin{enumerate}
\item\label{prop21a} Let $\mathbf{v}=(v_1,\ldots,v_L), \mathbf{v'}=(v_1',\ldots,v_L')\in\mu_{\ell}^k$. Suppose $GCD(\ell,m)=1$, then
\begin{equation*}
\sum_{a=0}^{m-1} \sum_{\mathbf{v},\mathbf{v}'\in\mu_{\ell}^L} \abs{\sum_{x=1}^L\sum_{t=1}^{m-1} e_m\left(t\left(\sum_{j=1}^x F(v_j)-\sum_{j=1}^x F(v_j')-a\right)\right)}^2 \leq 7m^4 L\ell^{2L+2}.
\end{equation*}
\item\label{prop21b} Let $k$ be a positive integer and $\textbf{a}=(a_1,\ldots,a_k)\in(\mathbb{Z}/m\mathbb{Z})^k$. For $1\leq l\leq k$, let $\mathbf{v}_l=(v_{l,1},\ldots,v_{l,L}), \mathbf{v}'_l=(v_{l,1}',\ldots,v_{l,L}')\in\mu_{\ell}^L$. Suppose $GCD(\ell,m)=1$, then
\begin{multline*}
\sum_{a\in(\mathbb{Z}/m\mathbb{Z})^k} \sum_{\substack{\mathbf{v}_l,\mathbf{v'}_l\in\mu_{\ell}^L \\ 1\leq l\leq k}} \abs{\sum_{x=1}^L\sum_{\mathbf{t}=(t_1,\ldots,t_k)\neq \mathbf{0}} e_m\left(\sum_{l=1}^k t_l\left(\sum_{j=1}^x F(v_{l,j})-\sum_{j=1}^x F(v_{l,j}')-a_l\right)\right)}^2 \\
\leq 7m^{2k+2} L\ell^{2Lk+2}.
\end{multline*}
\item\label{prop21c} If $\mathbf{v}=(v_1,\ldots,v_L), \mathbf{v}'=(v_1',\ldots,v_L')\in\{0,1\}^k$, then
\begin{equation*}
\sum_{a=0}^{m-1} \sum_{\mathbf{v},\mathbf{v'}\in\{0,1\}} \abs{\sum_{x=1}^L\sum_{t=1}^{m-1} e_m\left(t\left(\sum_{j=1}^x v_j-\sum_{j=1}^x v_j'-a\right)\right)}^2 \leq 2^{2L+2}m^4 L.
\end{equation*}
\end{enumerate}
\end{prop}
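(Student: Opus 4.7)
All three parts admit a common four-step approach, which I describe in detail for part (1) and then adapt to (2) and (3). The plan is to: (i) expand the squared modulus as a double sum indexed by $(x_1, t_1)$ and $(x_2, t_2)$; (ii) move the $a$-summation inside and apply orthogonality of additive characters modulo $m$ to force $t_1 = t_2$; (iii) evaluate the resulting sum over the $v_j, v_j'$, which factors completely across coordinates; (iv) sum the geometric series in $N = |x_2 - x_1|$ that emerges.

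After steps (i)--(ii) in part (1) one is left with $m \sum_{x_1, x_2, t, \mathbf{v}, \mathbf{v}'} e_m\!\big(t(A(x_1) - A(x_2))\big)$, where $A(x) = \sum_{j \le x}(F(v_j) - F(v_j'))$. By symmetry take $x_1 \le x_2$ and set $N = x_2 - x_1$; the $v_j, v_j'$ outside $(x_1, x_2]$ cancel and each contribute a trivial factor $\ell$. For a single $v \in \mu_\ell$ the identity $F(v) = \ell \cdot \mathbf{1}[v = 1]$ gives $\sum_{v} e_m(-t F(v)) = (\ell - 1) + e_m(-t\ell)$, so the inner sum equals $\ell^{2(L-N)} |(\ell-1) + e_m(t\ell)|^{2N}$. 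Next, $|(\ell-1) + e_m(t\ell)|^2 = \ell^2 - 4(\ell-1)\sin^2(\pi t\ell/m)$, so combined with $\sin^2(\pi x) \ge 4\|x\|^2$ and $\gcd(\ell, m) = 1$, the ratio $c_t := |(\ell-1) + e_m(t\ell)|^2/\ell^2$ is strictly less than $1$ for every $t \in \{1, \ldots, m-1\}$ and satisfies $1 - c_t \ge 16(\ell-1)\|t\ell/m\|^2/\ell^2$. Summing the geometric series $\sum_{N\ge 1}(L-N) c_t^N \le L c_t/(1-c_t)$, and then summing over $t$ via the standard bound $\sum_{t=1}^{m-1} \|t/m\|^{-2} \le 4m^2$ (and the fact that $t \mapsto t\ell \bmod m$ permutes nonzero residues), yields an off-diagonal contribution of order $m^3 L \ell^{2L+2}$, which sits comfortably inside the target $7 m^4 L \ell^{2L+2}$. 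The diagonal $x_1 = x_2$ piece is of order $m^2 L \ell^{2L}$ and is absorbed.

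For part (2), the identical manipulations produce a product $\prod_{l=1}^k |(\ell-1) + e_m(t_l \ell)|^{2N}$, so the relevant character sum becomes $\sum_{\mathbf{t} \neq \mathbf{0}} \prod_l c_{t_l}^N = \sum_{j=1}^k \binom{k}{j} S_N^j$ with $S_N = \sum_{t=1}^{m-1} c_t^N$. The key step is to write $\sum_N S_N^j = \sum_{\mathbf{t}' \in \{1,\ldots,m-1\}^j} \prod_i c_{t_i'}/(1 - \prod_i c_{t_i'})$, bound $1 - \prod_i c_{t_i'} \ge 1 - c_{t_1'}$, and reuse the single-variable estimate from part (1); the binomial sum $\sum_j \binom{k}{j}(m-1)^{j-1}$ then telescopes to $(m^k - 1)/(m-1)$, matching the $m^{2k+2}$ in the target without an exponential-in-$k$ loss. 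Part (3) is the simpler $\{0,1\}$-valued analogue: $\sum_{v \in \{0,1\}} e_m(-tv) = 1 + e_m(-t)$ gives $|1 + e_m(t)|^2 = 4\cos^2(\pi t/m) < 4$ for all $t \in \{1, \ldots, m-1\}$, so no coprimality hypothesis is needed and the argument of part (1) goes through verbatim with $\ell = 2$.

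The main obstacle I anticipate is the constant-chasing in part (2): one must decide carefully in which order to expand $(1 + S_N)^k - 1$, the inner $S_N^j$, and the $N$-sum in order to land within $7 m^{2k+2} L \ell^{2Lk+2}$ without picking up a spurious $2^k$ or $k!$. The trick described above---expand binomially first, then drop all but one of the decaying factors in each $S_N^j$ and invoke the one-variable estimate---keeps every step clean and gives the claimed constant with room to spare.
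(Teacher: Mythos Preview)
Your proposal is correct and follows essentially the same route as the paper: expand the square, apply orthogonality in $a$ to force $t_1=t_2$, factor the $\mathbf{v},\mathbf{v}'$ sum using $\sum_{v\in\mu_\ell} e_m(tF(v))=(\ell-1)+e_m(t\ell)$, and control the resulting geometric series in $N=|x_2-x_1|$ via $c_t<1$. The only noteworthy difference is in part~(b): where you worry about constant-chasing and introduce the binomial expansion $(1+S_N)^k-1=\sum_j\binom{k}{j}S_N^j$, the paper simply observes that for each $\mathbf{t}\neq\mathbf{0}$ one coordinate $t_{l_0}$ is nonzero, bounds $\prod_l c_{t_l}^N\le c_{t_{l_0}}^N$ (the other factors are at most~$1$), applies the one-variable geometric-series estimate, and multiplies by the count $m^k-1$ of nonzero $\mathbf{t}$ --- this sidesteps your ``main obstacle'' entirely and lands on the constant~$7$ with no combinatorics.
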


\begin{proof}
\eqref{prop21a} follows from \eqref{prop21b} by taking $k=1$. For \eqref{prop21b}, consider
\begin{align}
& \sum_{\mathbf{v}_l,\mathbf{v'}_l\in\mu_{\ell}^L} \abs{\sum_{x=1}^L\sum_{\mathbf{t}\neq\mathbf{0}} e_m\left(\sum_{l=1}^k t_l\left(\sum_{j=1}^x F(v_{l,j})-\sum_{j=1}^x F(v_{l,j}')-a_l\right)\right)}^2 \nonumber \\
= & \sum_{\mathbf{v}_l,\mathbf{v'}_l\in\mu_{\ell}^L} \left( \sum_{x_1=1}^L\sum_{\mathbf{t}_1\neq\mathbf{0}} e_m\left(\sum_{l=1}^k t_{l,1}\left(\sum_{j=1}^{x_1} F(v_{l,j})-\sum_{j=1}^{x_1} F(v_{l,j}')-a_l\right) \right)\right) \nonumber \\
& \qquad \times \left( \sum_{x_2=1}^L\sum_{\mathbf{t}_2\neq\mathbf{0}} e_m\left(-\sum_{l=1}^k t_{l,2}\left(\sum_{j=1}^{x_2} F(v_{l,j})-\sum_{j=1}^{x_2} F(v_{l,j}')-a\right) \right)\right) \nonumber \\
= & \sum_{\mathbf{v}_l,\mathbf{v'}_l\in\mu_{\ell}^L}\sum_{1\leq x_1,x_2\leq L}\sum_{\mathbf{t}_1,\mathbf{t}_2\neq\mathbf{0}} \prod_{l=1}^k e_m(a_l(t_{l,2}-t_{l,1})) \nonumber \\
& ~\times e_m\left( \sum_{l=1}^k \left(t_{l,1}\left( \sum_{j=1}^{x_1} F(v_{l,j})-\sum_{j=1}^{x_1} F(v_{l,j}') \right) - t_{l,2} \left( \sum_{j=1}^{x_2} F(v_{l,j})-\sum_{j=1}^{x_2} F(v_{l,j}') \right) \right) \right). \label{eqnprop21a}
\end{align}
Here $\mathbf{t}_1=(t_{1,1},\ldots,t_{k,1})$, and similarly for $\mathbf{t}_2$.

We now sum over all $a_l$ with $0\leq a_l \leq m-1$ and use the orthogonality relation
\begin{equation*}
\sum_{a_l=0}^{m-1} e_m(a_l(t_{l,2}-t_{l,1}))=
\begin{cases}
m, & t_{l,1}=t_{l,2}, \\
0, & t_{l,1}\neq t_{l,2}.
\end{cases}
\end{equation*}
Then \eqref{eqnprop21a} becomes
\begin{multline*}
m^k \sum_{\mathbf{v}_l,\mathbf{v'}_l\in\mu_{\ell}^L} \sum_{\textbf{t}\neq \textbf{0}} \sum_{1\leq x_1, x_2\leq L} \\
e_m\left( \sum_{l=1}^k t_l\left( \sum_{j=1}^{x_1} F(v_{l,j})-\sum_{j=1}^{x_1} F(v_{l,j}')- \sum_{j=1}^{x_2} F(v_{l,j})+\sum_{j=1}^{x_2} F(v_{l,j}') \right) \right).
\end{multline*}
We separate the terms with $x_1=x_2$ for which the looped sums inside the exponential vanish, which gives the total $m^k(m^k-1)L\ell^{2Lk}$. For the remaining terms, note that the looped sum for a particular pair is the negative of that of its reverse pair. So the above sum is
\begin{align}
& m^k(m^k-1)L\ell^{2Lk} + m^k \sum_{\textbf{t}\neq \textbf{0}} \sum_{1\leq x_1<x_2\leq L} \sum_{\mathbf{v}_l,\mathbf{v'}_l\in\mu_{\ell}^L} \nonumber \\
& \qquad e_m\left(\sum_{l=1}^k t_l\left( \sum_{j=x_1+1}^{x_2} F(v_{l,j})-\sum_{j=x_1+1}^{x_2} F(v_{l,j}')\right)\right) \nonumber \\
& \qquad\qquad +e_m\left( -t_l\left( \sum_{j=x_1+1}^{x_2} F(v_{l,j})-\sum_{j=x_1+1}^{x_2} F(v_{l,j}')\right) \right) \nonumber \\
= & m^k(m^k-1)L\ell^{2Lk} + 2m^k \sum_{\textbf{t}\neq\textbf{0}} \sum_{1\leq x_1<x_2\leq L} \nonumber \\
&~ \ell^{2Lk-2k(x_2-x_1)}\prod_{l=1}^k(e_m(\ell t_l)+\ell-1)^{x_2-x_1}(e_m(-\ell t_l)+\ell-1)^{x_2-x_1} \nonumber \\
= & m^k(m^k-1)L\ell^{2Lk} \nonumber \\
& \qquad + 2m^k\ell^{2Lk} \sum_{\textbf{t}\neq\textbf{0}} \sum_{1\leq x_1<x_2\leq L} \prod_{l=1}^k\left(\frac{(e_m(\ell t_l)+\ell-1)(e_m(-\ell t_l)+\ell-1)}{\ell^2}\right)^{x_2-x_1},  \label{eqnprop21b}
\end{align}
where in the penultimate step, we used
\begin{equation*}
\sum_{v^{\ell}=1}e_m(tF(v))=e_m(\ell t)+\ell -1.
\end{equation*}
For $GCD(\ell,m)=1$, we have
\begin{equation*}
\abs{\cos\left( \frac{2\pi\ell t}{m} \right)} \leq 1-\frac{\pi^2}{3m^2}
\end{equation*}
for any $1\leq t\leq m-1$. Hence,
\begin{align}
\frac{(e_m(\ell t)+\ell-1)(e_m(-\ell t)+\ell-1)}{\ell^2} &= \frac{\ell^2-2\ell+2+2(\ell-1)\cos\frac{2\pi\ell t}{m}}{\ell^2} \nonumber \\
& \leq 1-\frac{2(\ell-1)(1-\frac{\pi^2}{3m^2})}{\ell^2}. \label{eqnprop21c}
\end{align}
Fix $x_2-x_1=d$. For each $1\leq d\leq L-1$, the number of $(x_1,x_2)$ with $1\leq x_1<x_2 \leq L$ with $x_2-x_1=d$ is $L-d$. So \eqref{eqnprop21c} implies
\begin{align*}
& \sum_{1\leq x_1<x_2\leq L} \left(\frac{(e_m(\ell t)+\ell-1)(e_m(-\ell t)+\ell-1)}{\ell^2}\right)^{x_2-x_1} \\
\leq & \sum_{d=1}^{L-1} (L-d)\left( 1-\frac{2(\ell-1)(1-\frac{\pi^2}{3m^2})}{\ell^2} \right)^{d} \\
\leq & 3m^2\ell^2 L
\end{align*}
after some simplification. For any $\textbf{t}\neq\textbf{0}$ we have a nonzero coordinate for which the above calculations apply. Thus
\begin{equation*}
\sum_{\textbf{t}\neq\textbf{0}} \sum_{1\leq x_1<x_2\leq L} \prod_{l=1}^k\left(\frac{(e_m(\ell t_l)+\ell-1)(e_m(-\ell t_l)+\ell-1)}{\ell^2}\right)^{x_2-x_1} \leq (m^k-1)(3m^2\ell^2L).
\end{equation*}
Part \eqref{prop21b} now follows easily by inserting the above estimate in \eqref{eqnprop21b}.

For \eqref{prop21c}, we derive as above that
\begin{multline}\label{eqnprop21d}
\sum_{\mathbf{v},\mathbf{v'}\in\mu_{\ell}^L} \abs{\sum_{x=1}^L\sum_{t=1}^{m-1} e_m\left(t\left(\sum_{j=1}^x v_j-\sum_{j=1}^x v_j' -a\right)\right)}^2 \\
= m(m-1)2^{2L}L + m \sum_{t=1}^{m-1} \sum_{1\leq x_1<x_2\leq L} \sum_{\mathbf{v},\mathbf{v'}\in\{0,1\}} e_m\left( t\left( \sum_{j=x_1+1}^{x_2} v_j-\sum_{j=x_1+1}^{x_2} v_j'\right)\right) \\
+e_m\left( -t\left( \sum_{j=x_1+1}^{x_2} v_j-\sum_{j=x_1+1}^{x_2} v_j'\right) \right).
\end{multline}
Here from
\begin{equation*}
\sum_{v\in\{0,1\}}e_m(tv)=1+e_m(t)
\end{equation*}
and the inequality
\begin{equation*}
\abs{\cos\left( \cos\frac{\pi t}{m} \right)} \leq 1-\frac{\pi^2}{3m^2},
\end{equation*}
we see that the second term in \eqref{eqnprop21d} is
\begin{align*}
& 2\cdot2^{2L}m\sum_{t=1}^{m-1}\sum_{1\leq x_1<x_2\leq L} \left(\frac{(1+e_m(t))(1+e_m(-\ell t))}{4}\right)^{x_2-x_1} \\
= & 2^{2L+1}m^2 \sum_{d=1}^{L-1} (L-d)\left( \cos\frac{\pi t}{m} \right)^{d} \\
\leq & 2^{2L+1}m^2L \sum_{d=1}^{L-1} \left( 1-\frac{\pi^2}{3m^2} \right)^{d} \\
= & 2^{2L+1}m^2L \left( 1-\frac{\pi^2}{3m^2} \right) \frac{1-\left( 1-\frac{\pi^2}{3m^2} \right)^{2L-2}}{\frac{\pi^2}{3m^2}} \\
\leq & 2^{2L+1}m^4 L.
\end{align*}
Substituting this back into \eqref{eqnprop21d} completes the proof of \eqref{prop21c}.

\end{proof}

The next lemma is the classical Weil bound for incomplete exponential sums over $\mathbb{F}_p$. Let $\chi_{\ell}$ be a nontrivial multiplicatively character of order $\ell$. For a polynomial $P(x)\in\mathbb{F}_p[x]$ of degree $d$ and an interval $\CI\subseteq[0,p-1]$, define
\begin{equation*}
S_{\CI}(P)=\sum_{x\in\CI}\chi_{\ell}(P(x)).
\end{equation*}

\begin{lemma}\label{lemweil}
If $P(x)$ is not a complete $\ell$-th power, then
\begin{equation*}
\abs{S_{\CI}(P)}\leq 2(d+1)\sqrt{p}\log{p}.
\end{equation*}
\end{lemma}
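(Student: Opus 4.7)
The plan is the classical P\'olya--Vinogradov--Weil argument: reduce the incomplete sum $S_\CI(P)$ to complete character sums over all of $\mathbb{F}_p$ via Fourier analysis, and then apply Weil's theorem to each completed sum.

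First, I would write $\CI = (M, M+H]\cap\mathbb{Z}$ with $H=\abs{\CI}$ and expand the indicator function $\mathbf{1}_\CI$ by discrete Fourier inversion on $\mathbb{Z}/p\mathbb{Z}$, obtaining
\begin{equation*}
S_\CI(P) = \frac{1}{p}\sum_{a=0}^{p-1}\beta_a\, T_a(P),
\end{equation*}
where $\beta_a = \sum_{y\in\CI} e_p(-ay)$ and $T_a(P) = \sum_{x=0}^{p-1}\chi_\ell(P(x))\,e_p(ax)$, with $e_p(z)=e^{2\pi i z/p}$. The Fourier coefficients satisfy the standard geometric-series bound $\abs{\beta_a}\le \min\bigl(H,(2\norm{a/p})^{-1}\bigr)$ for $a\neq 0$, and $\abs{\beta_0}=H$.

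Next, I would apply Weil's theorem to the complete sums: since $P(x)$ is not a complete $\ell$-th power, $\chi_\ell\circ P$ is a nontrivial multiplicative character evaluated on a nonconstant polynomial, and the mixed-character form of Weil's bound yields $\abs{T_a(P)}\le (d+1)\sqrt{p}$ uniformly in $a$ (with the additive twist $e_p(ax)$ contributing at most one extra unit to the degree count). Combining these estimates with the classical harmonic bound $\sum_{a=1}^{p-1}\norm{a/p}^{-1}\le 2p\log p$ gives
\begin{equation*}
\abs{S_\CI(P)} \le \frac{(d+1)\sqrt{p}}{p}\left(H + \tfrac{1}{2}\sum_{a=1}^{p-1}\norm{a/p}^{-1}\right) \le 2(d+1)\sqrt{p}\log p,
\end{equation*}
since $H\le p$.

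Since this is a textbook reduction, there is no real obstacle; the only subtle point is invoking Weil's theorem in the correct (mixed-character) form, so that the bound on $T_a(P)$ holds uniformly in $a$, including the twisted case $a\neq 0$. The hypothesis that $P$ is not a complete $\ell'$-th power for any $\ell'$ coprime to $\ell$ is in fact much stronger than what is needed here, so the non-triviality of $\chi_\ell\circ P$ required for Weil's theorem is comfortably secured.
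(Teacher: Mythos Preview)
Your proposal is correct and follows essentially the same route as the paper's proof: both complete the sum via discrete Fourier inversion on $\mathbb{Z}/p\mathbb{Z}$, bound each twisted complete sum $T_a(P)=\sum_{x}\chi_\ell(P(x))e_p(ax)$ by $(d+1)\sqrt{p}$ using the mixed-character form of Weil's theorem, and then control the Fourier coefficients of $\mathbf{1}_{\CI}$ by the standard geometric-series/harmonic estimate. The only cosmetic difference is that the paper phrases the Fourier-coefficient bound via $|\sin(\pi t/p)|^{-1}$ rather than $\|t/p\|^{-1}$, which amounts to the same thing.
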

\begin{proof}
If $\CI$ is the complete interval $[0,p-1]$, the result follows from Weil's estimate \cite{Wei48a}. The same estimate hold for the sum:
\begin{equation}\label{eqnlemweila}
\abs{\sum_{x\in[0,p-1]} \chi_{\ell}(P(x))e_p(-tx)} \leq (d+1) \sqrt{p}
\end{equation}
for any $t\in\mathbb{F}_p$. If $\CI$ is not the complete interval, let $\CI\cap\mathbb{Z}=\{ a, a+1,\ldots,b \}$. We use a standard method to express the incomplete sum $S_{\CI}(P)$ in terms of complete sums. More precisely, we have
\begin{equation*}
S_{\CI}(P)=\sum_{x\in[0,p-1]}\chi_{\ell}(P(x))\left( \frac{1}{p}\sum_{n\in\CI}\sum_{t\mod{p}}e_p(t(n-x)) \right).
\end{equation*}
Changing the order of summation and using \eqref{eqnlemweila}, we get
\begin{align}
\abs{S_{\CI}(P)} &= \abs{\frac{1}{p}\sum_{t\text{~mod~$p$}}\left(\sum_{n\in\CI}e_p(tn)\right)\left(\sum_{x\in[0,p-1]}\chi_{\ell}(P(x))e_p(-tx)\right)} \nonumber \\
&\leq \frac{1}{p}(d+1)\sqrt{p}\abs{\sum_{t\text{~mod~$p$}}\left(\sum_{n\in\CI}e_p(tn)\right)} \nonumber \\
&= \frac{1}{p}(d+1)\sqrt{p}\left(\abs{\CI}+\abs{\sum_{t\neq 0\text{~mod~$p$}}\frac{e_p(t(a+1))-e_p(t(b+1))}{1-e_p(t)}}\right) \nonumber \\
&= \frac{1}{p}(d+1)\sqrt{p}\left(\abs{\CI}+\sum_{t\neq 0\text{~mod~$p$}}\frac{1}{\abs{\sin(t\pi/p)}}\right). \label{eqnlem22b}
\end{align}
Since $\abs{\sin(t\pi/p)}\geq \frac{\pi\abs{t}}{2p}$, we obtain
\begin{equation*}
\sum_{t\neq 0\text{~mod~$p$}}\frac{1}{\abs{\sin(t\pi/p)}} \leq 2\sum_{t=1}^{\frac{p-1}{2}}\frac{2p}{\pi \abs{t}} \leq \frac{4}{\pi}p\log{p}.
\end{equation*}
Inserting the above estimate into \eqref{eqnlem22b}, we obtain
\begin{equation*}
\abs{S_{\CI}(P)} \leq \frac{1}{p}(d+1)\sqrt{p}(\abs{\CI}+\frac{4}{\pi}p\log{p}) \leq 2(d+1)\sqrt{p}\log{p}.
\end{equation*}
This finishes the proof of the lemma.
\end{proof}

\section{Distribution of the number of points in residue classes: proof of Theorem \ref{thm1}}

Recall that we are studying the curve
\begin{equation*}
\CC: \qquad y^{\ell}=P(x).
\end{equation*}
We defined the quantities
\begin{equation*}
N_{\CC}(x_0,I) = \#\{ (x,y)\in\CC(\mathbb{F}_p) : x_0 < x \leq x_0+I \},
\end{equation*}
which is the number of points on $\CC$ inside a rectangle of some fixed length $I$, and
\begin{equation*}
\Phi_{\CC}(m,a)=\frac{1}{\abs{\CI}}\#\{ 0 \leq x_0 \leq p-1 : N_{\CC}(x_0,I)\equiv a \mod{m} \},
\end{equation*}
which can be regarded as the probability of the occurrence of $N_{\CC}(x_0,I)\equiv a \mod{m}$ for $x_0\in\CI$.

Let $N$ be a large number, $x_1,\ldots,x_r \in\mathbb{F}_p$ be distinct points and let $\mathbf{x}=(x_1,\ldots,x_r)$. Let $P(x)\in\mathbf{F}_p$ be a polynomial of degree $d$, and $\mathbf{v}=(v_1,\ldots,v_r)\in\mu_{\ell}^r$. Suppose $L\neq 0$ is an integer, and define
\begin{equation}\label{defMPv}
M_{P}(\mathbf{v})=M_{P,r,N,k}(\mathbf{v},\mathbf{x})=\{ 0\leq i \leq N: \chi_{\ell}(P(iL+x_j))=v_j ~\forall 1\leq j \leq r \}.
\end{equation}
This will serve as our bridge between the character values and the random walk setting. The following proposition estimates the size of $M_{P}(\mathbf{v})$.

\begin{prop}\label{prop22}
If $r < (\log p)/\log (4d)$ and $P(x)$ is not a complete $\ell$-th power, then for any $\mathbf{v}\in\mu_{\ell}^r$, we have
\begin{equation*}
\#M_{P}(\mathbf{v}) = \frac{N}{\ell^r}+\frac{2(dr(\ell-1)+1)}{\ell^r}\sqrt{p}\log{p}+O(d).
\end{equation*}
\end{prop}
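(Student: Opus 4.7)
The plan is to translate the counting problem into a sum of multiplicative character sums by orthogonality, and then bound the non-principal sums using Weil's estimate (Lemma \ref{lemweil}). The critical structural input that makes the Weil bound applicable is Lemma \ref{lem22}, which guarantees that the composed polynomials arising in the expansion are not complete $\ell$-th powers.

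Concretely, starting from the orthogonality identity
\[
\mathbf{1}\bigl[\chi_\ell(a)=v\bigr]=\frac{1}{\ell}\sum_{k=0}^{\ell-1}v^{-k}\chi_\ell(a)^k \qquad (a\in\mathbb{F}_p^\times,\ v\in\mu_\ell),
\]
I would apply this to each of the $r$ conditions $\chi_\ell(P(iL+x_j))=v_j$ defining $M_P(\mathbf{v})$ and expand the resulting product. Summing over $i$ in the set $A=\{0\le i\le N:P(iL+x_j)\ne 0\ \forall j\}$ (outside which the indicator vanishes automatically), this gives
\[
\#M_P(\mathbf{v})=\frac{1}{\ell^r}\sum_{\mathbf{k}\in\{0,\ldots,\ell-1\}^r}\Bigl(\prod_{j=1}^r v_j^{-k_j}\Bigr)\sum_{i\in A}\chi_\ell\bigl(Q_\mathbf{k}(i)\bigr),
\]
where $Q_\mathbf{k}(x)=\prod_{j=1}^r P(Lx+x_j)^{k_j}$, and the complement $\{0,\ldots,N\}\setminus A$ contains at most $rd$ indices.

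The term $\mathbf{k}=\mathbf{0}$ contributes $|A|/\ell^r=N/\ell^r+O(d)$, since $rd/\ell^r\le d$ (as $r\le\ell^r$). For $\mathbf{k}\ne\mathbf{0}$, Lemma \ref{lem22}, applied with $a=L$ (nonzero in $\mathbb{F}_p$) and $b_j=x_j$, shows that $Q_\mathbf{k}$ is not a complete $\ell$-th power; the hypothesis $r<(\log p)/\log(4d)$ is exactly what that lemma requires. Since $\deg Q_\mathbf{k}\le rd(\ell-1)$, extending the restricted sum over $A$ to a full interval sum at a cost of $O(rd)$ and then invoking Lemma \ref{lemweil} yields
\[
\Bigl|\sum_{i=0}^{N}\chi_\ell\bigl(Q_\mathbf{k}(i)\bigr)\Bigr|\le 2\bigl(rd(\ell-1)+1\bigr)\sqrt{p}\log p.
\]
Combining this with the trivial bound $|\prod_j v_j^{-k_j}|=1$ and summing over the non-trivial $\mathbf{k}$ produces the stated asymptotic.

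The principal obstacle is purely the non-degeneracy verification that the composed polynomial $Q_\mathbf{k}$ fails to be a complete $\ell$-th power for every admissible $\mathbf{k}\ne\mathbf{0}$; this is precisely the content of Lemma \ref{lem22}. Once that input is secured, the remainder of the argument is routine character-sum bookkeeping, combining the orthogonality expansion with the Weil bound and tracking the $O(d)$ contribution from the exceptional indices where some $P(iL+x_j)$ vanishes.
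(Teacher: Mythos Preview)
Your proposal is correct and follows essentially the same approach as the paper: both expand the indicator $\mathbf{1}[\chi_\ell(P(iL+x_j))=v_j\ \forall j]$ via orthogonality (the paper packages this using $F(v)=1+v+\cdots+v^{\ell-1}$, which is the same kernel), isolate the main term $N/\ell^r$ from $\mathbf{k}=\mathbf{0}$, invoke Lemma~\ref{lem22} to certify that $Q_\mathbf{k}$ is not a complete $\ell$-th power for $\mathbf{k}\neq\mathbf{0}$, and then apply the incomplete Weil bound of Lemma~\ref{lemweil}. Your handling of the exceptional set $A$ and the bound $rd/\ell^r\le d$ is in fact slightly more explicit than the paper's, but the arguments are otherwise identical.
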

\begin{proof}
The number of points $x\in\mathbb{F}_p$ with $P(x)=0$ is $O(\deg P)=O(d)$. Hence, there are $N+O(d)$ indices $i$ such that $P(iL+x_j)\neq 0$ for all $1\leq j\leq r$. For those $i$, we have
\begin{equation*}
\frac{1}{\ell^r}\prod_{j=1}^r F(v_j^{-1}\chi_{\ell}(P(iL+x_j)))=
\begin{cases}
1 &, \text{~if~}i\in M_{P}(\mathbf{v}), \\
0 &, \text{~otherwise},
\end{cases}
\end{equation*}
where $F(v)$ is defined in \eqref{defF}. Thus,
\begin{equation*}
\#M_{P}(\mathbf{v}) = \frac{1}{\ell^r} \sum_{i=0}^N \prod_{j=1}^r F(v_j^{-1}\chi_{\ell}(P(iL+x_j))) +O(d).
\end{equation*}
Expanding the above product and changing the order of summation, we obtain
\begin{multline}
\#M_{P}(\mathbf{v}) = \frac{N}{\ell^{r}}+\frac{1}{\ell^r}\sum_{\substack{\mathbf{e}=(e_1,\ldots,e_r)\neq 0 \\ 0\leq e_j \leq \ell-1}} v_1^{-e_1}\ldots v_r^{-e_r} \\
\times \sum_{i=0}^N\chi_{\ell}(P(iL+x_1))^{e_1}\ldots\chi_{\ell}(P(iL+x_r))^{e_r} + O(d). \label{eqnprop22a}
\end{multline}
Since the $x_j$ are distinct points on $\mathbb{F}_p$ and $r < (\log p)/\log (4\deg P) $, Lemma \ref{lem22} shows that the polynomial
\begin{equation*}
Q(i)=P(iL+x_1)^{e_1}\ldots P(iL+x_r)^{e_r}
\end{equation*}
is not a complete $\ell$-th power. Hence, by Lemma \ref{lemweil}, we have
\begin{equation*}
\abs{\sum_{i=1}^N\chi_{\ell}\left(P(iL+x_1)^{e_1}\ldots P(iL+x_r)^{e_r}\right)} \leq 2(dr(\ell-1)+1)\sqrt{p}\log{p}.
\end{equation*}
Inserting the above estimate back into \eqref{eqnprop22a}, we obtain
\begin{equation*}
\#M_{P}(\mathbf{v}) \leq \frac{N}{\ell^r}+\frac{2(dr(\ell-1)+1)}{\ell^r}\sqrt{p}\log{p}+O(d).
\end{equation*}
\end{proof}

We are now ready to prove Theorem \ref{thm1}.
\begin{proof}[Proof of Theorem \ref{thm1}]
Let $L=L(p)\leq\left[\frac{\log{p}}{2\log{4d}}\right]$ be a large number, and let $N=[\abs{\CI}/L]-1$. Define
\begin{equation*}
R_{P,m,a}(i,L)=\#\{ 1\leq x \leq L: N_{\CC}(iL+x,I)\equiv a \pmod{m} \}.
\end{equation*}
We have
\begin{equation}\label{eqnthm1a}
\abs{\Phi_{\CC}(m,a)-\frac{1}{m}}\leq\frac{1}{\abs{\CI}}\sum_{i=0}^N\abs{R_{P,m,a}(i,L)-\frac{L}{m}}+O\left(\frac{L}{\abs{\CI}}\right).
\end{equation}
By the Cauchy-Schwarz inequality,
\begin{equation*}
\left(\sum_{i=0}^N\abs{R_{P,m,a}(i,L)-\frac{L}{m}}\right)^2 \leq (N+1)\sum_{i=0}^N \left(R_{P,m,a}(i,L)-\frac{L}{m}\right)^2.
\end{equation*}
Putting this back into \eqref{eqnthm1a}, we obtain
\begin{equation}\label{eqnthm1b}
\left(\Phi_{\CC}(m,a)-\frac{1}{m}\right)^2 \leq \frac{N+1}{\abs{\CI}^2}\sum_{i=0}^N \left(R_{P,m,a}(i,L)-\frac{L}{m}\right)^2 + O\left(\frac{L^2}{\abs{\CI}^2}\right).
\end{equation}
Now note that
\begin{equation*}
N_{\CC}(iL+x,I)=N_{\CC}(iL,I)+\sum_{j=1}^{x}F(\chi_{\ell}(P(iL+I+j)))-\sum_{j=1}^{x}F(\chi_{\ell}(P(iL+j))),
\end{equation*}
so if we set
\begin{multline*}
R'_{P,m,b}(i,L) = \frac{1}{m}\#\{ 1\leq x\leq L: \\
\sum_{j=1}^{x}F(\chi_{\ell}(P(iL+I+j)))-\sum_{j=1}^{x}F(\chi_{\ell}(P(iL+j))) \equiv b \pmod{m} \}
\end{multline*}
and use the substitution $b=a+N_{\CC}(sL,I)$, then
\begin{equation}\label{eqnthm1c}
\sum_{a=0}^{m-1}\sum_{i=0}^N \left(R_{P,m,a}(i,L)-\frac{L}{m}\right)^2 = \sum_{b=0}^{m-1}\sum_{i=0}^N \left(R'_{P,m,b}(i,L)-\frac{L}{m}\right)^2.
\end{equation}
Using the orthogonality of character sums, we get
\begin{multline*}
R'_{P,m,b}(i,L) \\
=\sum_{x=1}^L\sum_{t=0}^{m-1} e_m\left(t\left(\sum_{i=1}^x F(\chi_{\ell}(P(iL+I+j))-\sum_{i=1}^x F(\chi_{\ell}(P(iL+j))-b\right)\right),
\end{multline*}
and hence
\begin{align}
& \sum_{i=0}^N \left(R'_{P,m,b}(i,L)-\frac{L}{m}\right)^2 \nonumber \\
=& \frac{1}{m^2}\sum_{i=0}^N \abs{\sum_{x=1}^L\sum_{t=0}^{m-1} e_m\left(t\left(\sum_{j=1}^x F(\chi_{\ell}(P(iL+I+j))-\sum_{j=1}^x F(\chi_{\ell}(P(iL+j))-b\right)-L\right)}^2 \nonumber \\
=& \frac{1}{m^2}\sum_{i=0}^N \abs{\sum_{x=1}^L\sum_{t=1}^{m-1} e_m\left(t\left(\sum_{j=1}^x F(\chi_{\ell}(P(iL+I+j))-\sum_{j=1}^x F(\chi_{\ell}(P(iL+j))-b\right)\right)}^2 \nonumber \\
=& \frac{1}{m^2}\sum_{\textbf{v},\textbf{v}'\in\mu_{\ell}^{L}} \abs{\sum_{x=1}^L\sum_{t=1}^{m-1} e_m\left(t\left(\sum_{j=1}^x F(v_j)-\sum_{j=1}^x F(v_j')-b\right)\right)}^2\cdot \#M_P(\textbf{w}), \label{eqnthm1d}
\end{align}
where $M_P(\textbf{w})$ is defined in \eqref{defMPv}, with $\mathbf{v}=(v_1,\ldots,v_L)$, $\mathbf{v}'=(v_1',\ldots,v_L')$, $\mathbf{w}=(v_1,\ldots,v_L,v_1',\ldots,v_L')$, and
\begin{equation*}
\mathbf{x}=(iL+I+1,\ldots,iL+I+L,iL+1,\ldots,iL+L).
\end{equation*}
Note that as $p-L > I > L$, the entries in $\mathbf{x}$ are indeed distinct. Putting \eqref{eqnthm1d} back into \eqref{eqnthm1c}, applying Proposition \ref{prop21}\eqref{prop21a} and Proposition \ref{prop22}, we have after some simplifications
\begin{equation*}
\sum_{a=0}^{m-1}\sum_{i=0}^N \left(R_{P,m,a}(i,L)-\frac{L}{m}\right)^2 = \frac{7m^3\ell^2LN^2}{\abs{\CI}^2}+O\left(\frac{m^3\ell^3 L^2 N\sqrt{p}\log{p}}{\abs{\CI}^2}\right).
\end{equation*}
Combining this estimate with \eqref{eqnthm1b}, we obtain
\begin{align*}
\sum_{a=0}^{m-1}\left(\Phi_{\CC}(m,a)-\frac{1}{m}\right)^2 & \leq \frac{7m^3\ell^2}{L}+O\left( \frac{m^3 \ell^3 L \sqrt{p}\log{p}}{\abs{\CI}} \right).
\end{align*}
This completes the proof of Theorem \ref{thm1}.
\end{proof}

\section{Joint distribution among curves: proof of Theorem \ref{thm2}}

Before we prove Theorem \ref{thm2}, we need a generalization of Proposition \ref{prop22}. Let $x_1,\ldots,x_r \in\mathbb{F}_p$ be distinct points, and let $\mathbf{x}=(x_1,\ldots,x_r)$. For $1\leq l\leq k$, let $P_l(x)\in\mathbf{F}_p$ be polynomials of degree $d_l$, $d=d_1+\ldots+d_k$, and $\mathbf{v}_l=(v_{l,1},\ldots,v_{l,r})\in\mu_{\ell}^r$. Suppose $L\neq 0$ is an integer, and define the set
\begin{equation*}
M_{P_1,\ldots,P_k}(\mathbf{v}_1,\ldots,\mathbf{v}_k)=\{ 0\leq i \leq N: \chi_{\ell}(P_l(iL+x_j))=v_{l,j} ~\forall 1\leq j \leq r, 1\leq l\leq k \}.
\end{equation*}

\begin{prop}\label{prop23}
Assume the $P_l(x)$ are not complete $\ell$-th powers, and the set $\{P_1(x),\ldots,P_k(x)\}$ is multiplicatively independent. If $r < (\log p)/\log (4d)$,
then for any $\mathbf{v}_1,\ldots,\mathbf{v}_k$, we have
\begin{equation*}
\#M_{P_1,\ldots,P_k}(\mathbf{v}_1,\ldots,\mathbf{v}_k) = \frac{N}{\ell^{kr}}+\frac{2dkr(\ell-1)+1}{\ell^{kr}}\sqrt{p}\log{p}+O(d).
\end{equation*}
\end{prop}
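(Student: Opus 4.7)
The plan is to mirror the proof of Proposition \ref{prop22}, replacing the single-polynomial orthogonality with a $k$-fold product indexed by $(l,j)$ and replacing Lemma \ref{lem22} with a multivariate extension that leverages multiplicative independence. Throwing away the $O(d)$ indices $i\in\{0,\ldots,N\}$ at which some $P_l(iL+x_j)$ vanishes, I begin from the identity
\[
\mathbf{1}_{\{\chi_\ell(P_l(iL+x_j))=v_{l,j}\,\forall\,l,j\}} \;=\; \frac{1}{\ell^{kr}}\prod_{l=1}^{k}\prod_{j=1}^{r}F\!\bigl(v_{l,j}^{-1}\chi_\ell(P_l(iL+x_j))\bigr),
\]
expand each $F(v)=\sum_{e=0}^{\ell-1}v^e$, and switch the order of summation to obtain
\[
\#M_{P_1,\ldots,P_k}(\mathbf{v}_1,\ldots,\mathbf{v}_k) \;=\; \frac{N}{\ell^{kr}} + \frac{1}{\ell^{kr}}\sum_{\mathbf{E}\ne\mathbf{0}}\Bigl(\prod_{l,j}v_{l,j}^{-e_{l,j}}\Bigr)\sum_{i=0}^{N}\chi_\ell\bigl(Q_{\mathbf{E}}(i)\bigr) + O(d),
\]
where $\mathbf{E}=(e_{l,j})$ ranges over nonzero tuples with $0\le e_{l,j}\le\ell-1$, and $Q_{\mathbf{E}}(i):=\prod_{l,j}P_l(iL+x_j)^{e_{l,j}}$ lies in $\overline{\mathbb{F}}_p[i]$.

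The technical heart of the proof---and in my view the main obstacle---is to establish that $Q_{\mathbf{E}}(i)$ is not a complete $\ell$-th power for any $\mathbf{E}\ne\mathbf{0}$. This is a genuine multi-polynomial strengthening of Lemma \ref{lem22}, and cannot be reduced to it polynomial-by-polynomial: two non-$\ell$-th-power contributions coming from distinct $P_l$'s could a priori conspire to produce an $\ell$-th power, and ruling this out is exactly what the multiplicative independence hypothesis is for. My approach is to apply Lemma \ref{lem21} to the combined scaled-root set $\CM=\bigcup_{l=1}^{k}\{\alpha/L : P_l(\alpha)=0\}\subset\overline{\mathbb{F}}_p$, which satisfies $|\CM|\le d_1+\cdots+d_k=d$ and hence $4|\CM|<p^{1/r}$ by the hypothesis $r<(\log p)/\log(4d)$. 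Lemma \ref{lem21} produces a distinguished index $j_0$ and a root $\gamma\in\CM-x_{j_0}/L$ that lies in $\CM-x_j/L$ for no $j\ne j_0$; the multiplicity of $\gamma$ in $Q_{\mathbf{E}}$ is thus a linear combination of terms $e_{l,j_0}\cdot m$ taken only over those $P_l$ for which $L\gamma+x_{j_0}$ is a root of multiplicity $m$. A descent argument in the spirit of Lemma \ref{lem22}, working jointly on $r$, on the exponent $\tilde{\ell}$ supposed to divide the multiplicity, and on the number of $P_l$'s active at $\gamma$, and exploiting both the multiplicative independence of $\{P_l\}$ and the fact that no $P_l$ is an $\ell'$-th power for any $\ell'$ coprime to $\ell$, forces this multiplicity to be nonzero modulo $\ell$ for some choice of $(j_0,\gamma)$, yielding the desired contradiction.

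Once the non-$\ell$-th-power property is established, Lemma \ref{lemweil} gives
\[
\Bigl|\sum_{i=0}^{N}\chi_\ell(Q_{\mathbf{E}}(i))\Bigr| \;\le\; 2(\deg Q_{\mathbf{E}}+1)\sqrt{p}\log p,
\]
and since $\deg Q_{\mathbf{E}}\le(\ell-1)kr d$, summing over the nonzero $\mathbf{E}$'s and dividing by $\ell^{kr}$ yields an error of the shape claimed in the proposition. The descent step flagged above is the only point where the multiplicative independence of $\{P_1,\ldots,P_k\}$ is genuinely needed, and is where essentially all of the new work lies beyond the proof of Proposition \ref{prop22}.
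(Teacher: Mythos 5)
Your overall architecture matches the paper's: the indicator identity with the $k\times r$-fold product of $F$'s, expansion over nonzero exponent tuples, and Lemma \ref{lemweil} applied to each $Q_{\mathbf{E}}$. You have also correctly located the one point where something beyond Proposition \ref{prop22} is needed, namely that $Q_{\mathbf{E}}(i)=\prod_{l,j}P_l(iL+x_j)^{e_{l,j}}$ is never a complete $\ell$-th power for $\mathbf{E}\neq\mathbf{0}$. The problem is that this is exactly the step you do not prove: ``a descent argument in the spirit of Lemma \ref{lem22} \dots forces this multiplicity to be nonzero modulo $\ell$'' is a statement of intent, not an argument, and since everything else in the proposition is a verbatim repetition of Proposition \ref{prop22}, the proposal as written omits essentially all of the new content. (In fairness, the paper is equally terse here: it simply asserts that Lemma \ref{lem22} together with multiplicative independence gives the conclusion, even though Lemma \ref{lem22} is stated only for shifts of a single polynomial.)

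Worse, the descent you describe cannot be completed from the stated hypotheses, so this is not merely an omitted routine verification. Multiplicative independence over $\mathbb{Z}$ controls when $\prod_l P_l^{e_l}$ is identically $1$, but what the argument needs is control of when $\prod_l P_l^{e_l}$ (with $0\le e_l\le\ell-1$, not all zero) is an $\ell$-th power, i.e.\ independence of the classes of the $P_l$ in $\mathbb{F}_p(x)^{\times}/(\mathbb{F}_p(x)^{\times})^{\ell}$. These are different conditions: take $\ell=3$, $P_1=f^2g$, $P_2=fg^2$ with $f,g$ distinct irreducibles. Then $P_1^{e_1}P_2^{e_2}=f^{2e_1+e_2}g^{e_1+2e_2}$ is identically $1$ only for $e_1=e_2=0$, so $\{P_1,P_2\}$ is multiplicatively independent, and neither $P_i$ is a complete $\ell'$-th power for any $\ell'$; yet $P_1P_2=(fg)^3$, so already with $r=1$ and $e_{1,1}=e_{2,1}=1$ the polynomial $Q_{\mathbf{E}}$ is a perfect cube, the corresponding character sum has size $N$ rather than $O(\sqrt{p}\log p)$, and indeed $\chi_3(P_2(x))=\chi_3(P_1(x))^{-1}$ identically, so $\#M_{P_1,P_2}$ is not $N/\ell^{2r}+o(N)$. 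Concretely: at your distinguished root $\gamma$ the multiplicity of $Q_{\mathbf{E}}$ is $\sum_l e_{l,j_0}m_l(\gamma)$, and nothing in the hypotheses prevents this from being $\equiv 0\pmod{\ell}$ for every root. To repair the argument one must strengthen the independence hypothesis to independence modulo $\ell$-th powers (equivalently, impose a condition on the $\ell$-adic valuation vectors of the roots), after which your use of Lemma \ref{lem21} on the combined root set $\CM$, together with the single-polynomial descent of Lemma \ref{lem22} applied at the distinguished shift, does go through.
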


\begin{proof}
We follow the same idea as in the proof of Proposition \ref{prop22}. For those $x\in\mathbb{F}_p$ which are not roots of any $P_l$, we have
\begin{equation*}
\frac{1}{\ell^{kr}}\prod_{l=1}^k\prod_{j=1}^r F(v_j^{-1}\chi_{\ell}(P_l(iL+x_j)))=
\begin{cases}
1 &, \text{~if~}i\in M_{P_1,\ldots,P_k}(\mathbf{v}_1,\ldots,\mathbf{v}_k), \\
0 &, \text{~otherwise}.
\end{cases}
\end{equation*}
So,
\begin{equation*}
\#M_{P_1,\ldots,P_k}(\mathbf{v}_1,\ldots,\mathbf{v}_k) = \frac{1}{\ell^{kr}} \sum_{i=0}^N \prod_{l=1}^k\prod_{j=1}^r F(v_j^{-1}\chi_{\ell}(P_l(iL+x_j))) +O(d).
\end{equation*}
Expanding the above product, we obtain
\begin{multline}
\#M_{P_1,\ldots,P_k}(\mathbf{v}_1,\ldots,\mathbf{v}_k) = \frac{N}{\ell^{kr}}+\frac{1}{\ell^{kr}}\sum_{\mathbf{e}\in S} \prod_{l=1}^k\prod_{j=1}^r v_{l,j}^{-e_{l,j}} \\
\times \sum_{i=0}^N\chi_{\ell}\left(\prod_{l=1}^k\prod_{j=1}^r P_l(iL+x_r)\right)^{e_{l,r}} + O(d), \label{eqnprop23a}
\end{multline}
where
\begin{equation*}
S=\{\mathbf{e}=(e_{l,j})_{\substack{1\leq l\leq k \\ 1\leq j\leq r}}: 0\leq e_{l,j} \leq \ell-1\}.
\end{equation*}
As $r < (\log p)/\log (4d)$ and the $P_l$'s are multiplicatively independent, Lemma \ref{lem22} implies that the polynomial
\begin{equation*}
Q(i)=\prod_{l=1}^k\prod_{j=1}^r P_l(iL+x_r)^{e_{l,r}}
\end{equation*}
cannot be a complete $\ell$-th power for any choice of $\mathbf{e}\in S$ unless $\mathbf{e}$ is the zero vector. Therefore, we can employ Lemma \ref{lemweil} in \eqref{eqnprop23a} to get
\begin{equation*}
\#M_{P_1,\ldots,P_k}(\mathbf{v}_1,\ldots,\mathbf{v}_k) = \frac{N}{\ell^{kr}}+\frac{2(dkr(\ell-1)+1)}{\ell^{kr}}\sqrt{p}\log{p}+O(d).
\end{equation*}
\end{proof}

\begin{proof}[Proof of Theorem \ref{thm2}]
The proof of Theorem \ref{thm2} follows the same line as that of Theorem \ref{thm1}. Let $L=L(p)\leq\left[\frac{\log{p}}{2\log{4d}}\right]$, and let $N=[\abs{\CI}/L]-1$. Define
\begin{equation*}
R_{m,\textbf{a},k}(i,L)=\#\{ 1\leq x \leq L: N_{l}(iL+x,I)\equiv a_l \pmod{m} ~ \forall 1\leq l\leq k \}.
\end{equation*}
We have
\begin{equation*}
\abs{\Phi(m,\textbf{a})-\frac{1}{m^k}}\leq\frac{1}{\abs{\CI}}\sum_{i=0}^N\abs{R_{m,\textbf{a},k}(i,L)-\frac{L}{m^k}}+O\left(\frac{L}{p}\right).
\end{equation*}
Again by Cauchy-Schwarz inequality,
\begin{equation*}
\left(\sum_{i=0}^N\abs{R_{m,\textbf{a},k}(i,L)-\frac{L}{m^k}}\right)^2 \leq (N+1)\sum_{i=0}^N \left(R_{m,\textbf{a},k}(i,L)-\frac{L}{m^k}\right)^2,
\end{equation*}
which implies
\begin{equation*}
\abs{\Phi(m,\textbf{a})-\frac{1}{m^k}}^2 \leq \frac{N+1}{\abs{\CI}^{2}}\sum_{i=0}^N \left(R_{m,\textbf{a},k}(i,L)-\frac{L}{m^k}\right)^2+O\left(\frac{L^2}{p^{2}}\right).
\end{equation*}
Note that
\begin{equation*}
N_{l}(iL+x,I)=N_{l}(iL,I)+\sum_{j=1}^{x}F(\chi_{\ell}(P_l(iL+I+j)))-\sum_{j=1}^{x}F(\chi_{\ell}(P_l(iL+j)))
\end{equation*}
for all $1\leq l\leq k$. To simplify the notations, write
\begin{equation*}
\Sigma(x,l)=\sum_{j=1}^{x}F(\chi_{\ell}(P_l(iL+I+j)))-\sum_{j=1}^{x}F(\chi_{\ell}(P_l(iL+j))).
\end{equation*}
Let $\textbf{b}=\textbf{a}+(N_{l}(sL,I))_{1\leq l\leq k}$, and set
\begin{equation*}
R'_{m,\textbf{b}}(i,L) =\#\{ 1\leq x\leq L: \Sigma(x,l) \equiv b_l \pmod{m} ~ \forall 1\leq l\leq k \},
\end{equation*}
then
\begin{equation}\label{eqnthm2a}
\sum_{a\in(\mathbb{Z}/m\mathbb{Z})^k}\sum_{i=0}^N \left(R_{m,\textbf{a}}(i,L)-\frac{L}{m^k}\right)^2 = \sum_{b\in(\mathbb{Z}/m\mathbb{Z})^k}\sum_{i=0}^N \left(R'_{m,\textbf{b}}(i,L)-\frac{L}{m^k}\right)^2.
\end{equation}
Since
\begin{equation*}
R'_{m,\mathbf{b}}(i,L)=\frac{1}{m^k} \sum_{x=1}^L \prod_{l=1}^k\sum_{t_l=0}^{m-1} e_m(t_l\Sigma(x,l)-b_l),
\end{equation*}
a similar calculation as in \eqref{eqnthm1d} gives
\begin{align}
& \sum_{i=0}^N \left(R'_{m,\mathbf{b}}(i,L)-\frac{L}{m^k}\right)^2 \nonumber \\
= & \frac{1}{m^{2k}}\sum_{i=0}^N \abs{\sum_{x=1}^L \prod_{l=1}^k\sum_{t_l=0}^{m-1} e_m(t_l\Sigma(x,l)-b_l)-L}^2 \nonumber \\
= & \frac{1}{m^{2k}}\sum_{\mathbf{v}_l,\mathbf{v}_l'\in\mu_{\ell}^{L}} \abs{\sum_{x=1}^L\sum_{\mathbf{t}\neq \mathbf{0}} e_m\left(\sum_{l=1}^k t_l\left(\sum_{j=1}^x F(v_{l,j})-\sum_{j=1}^x F(v_{l,j}')-b_l\right)\right)}^2 \label{eqnthm2b} \\
& \qquad \times \#M_{P_1,\ldots,P_k}(\mathbf{w}_1,\ldots,\mathbf{w}_k), \nonumber
\end{align}
with $\textbf{w}_l=(\textbf{v}_l,\textbf{v}_l')$. Substituting \eqref{eqnthm2b} back into \eqref{eqnthm2a}, applying Proposition \ref{prop21}\eqref{prop21b} and Proposition \ref{prop23}, we obtain
\begin{equation*}
\sum_{\textbf{a}\in(\mathbb{Z}/m\mathbb{Z})^k}\sum_{i=0}^N \left(R'_{m,\textbf{b}}(i,L)-\frac{L}{m^k}\right)^2\leq 7NLm^{k+2}\ell^2+O(dkL^2\ell^3m^{k+2}\sqrt{p}\log{p}),
\end{equation*}
and so
\begin{equation*}
\sum_{\textbf{a}\in(\mathbb{Z}/m\mathbb{Z})^k}\left(\Phi(m,\textbf{a})-\frac{1}{m^k}\right)^2 \leq \frac{7m^{k+2}\ell^2}{L}+O\left(\frac{dkL\ell^3m^{k+2}\sqrt{p}\log{p}}{\abs{\CI}}\right).
\end{equation*}
\end{proof}

\section{The case of restricted domains: proof of Theorem \ref{thm3}}

In this section we study the case when the domain is restricted to a smaller rectangle $\Omega=\CI\times\CJ$ that satisfies the condition \eqref{cond1}. For any $x\in[0,p-1]$, define
\begin{equation*}
\delta_{\CC,\Omega}(x) =
\begin{cases}
1 &, \text{~if~} x\in\CI \text{~and~} \exists y\in\CJ \text{~so that~} (x,y)\in\CC, \\
0 &, \text{~otherwise.}
\end{cases}
\end{equation*}
Let $\mathbf{x}=(x_1,\ldots,x_r)\in[0,p-1]^r$, and let $\mathbf{v}=(v_1,\ldots,v_r)\in\{0,1\}^r$ be a vector. As in the proofs of previous theorems, we introduce a set and estimate its size. Define
\begin{equation*}
M_{\CC,\Omega}(\mathbf{v})=\{ x\in\CI: L|x, \delta_{\CC,\Omega}(x+x_j)=v_j ~\forall 1\leq j \leq r \}.
\end{equation*}

\begin{remark}
For $x\in\CI$, one can write down an explicit formula for $\delta_{\CC,\Omega}(x)$ involving exponential sums. Write the defining equation of $\CC$ as $f(x,y):=y^{\ell}-P(x)=0$. Consider
\begin{equation*}
S(x) = \sum_{y\in\CJ}\sum_{t\in\mathbb{F}_p} t f(x,y).
\end{equation*}
Then $S$ is the number of points in $\CC\cap\Omega$. Now our assumption \eqref{cond1} guarantees that $\delta_{\CC,\Omega}(x)=S(x)$. This formula was used by Dwork \cite{Dwo64} to prove the rationality of zeta functions of varieties over finite fields. We will not need this formula in our paper.
\end{remark}

In previous sections, we used characters to relate the random walk setting and the distribution of number of points on $\CC$, which does not allow us to control the $y$-coordinates. To allow restrictions on the domain, we proceed as follows. Let $\CH=\{h_1,\ldots,h_r\}\subseteq[0,p-1]$ be a set of integers. From the curve $\CC$ defined by \eqref{eqnc1}, we construct the $x$-\textit{shifted curve} $\CC_{\CH}$ to be the curve defined by the following system of equations:
\begin{align*}
y_1^{\ell}&=P(x+h_1) \\
y_2^{\ell}&=P(x+h_2) \\
&\vdots \\
y_r^{\ell}&=P(x+h_r).
\end{align*}
It is easy to see that $\CC_{\CH}$ is indeed a curve. The next lemma shows that this curve is absolutely irreducible if $r$ is not too large.

\begin{lemma}
If $r < \frac{\log p}{\log (4d)}$, then $\CC_{\CH}$ is absolutely irreducible.
\end{lemma}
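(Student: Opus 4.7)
The plan is to reduce the irreducibility question to a statement about $\ell$-th powers in $\overline{\mathbb{F}}_p(x)$ and then invoke Lemma \ref{lem22}. Let
\[R = \overline{\mathbb{F}}_p[x,y_1,\ldots,y_r]/\bigl(y_j^\ell-P(x+h_j)\bigr)_{j=1}^r\]
be the coordinate ring of $\CC_{\CH}$ over $\overline{\mathbb{F}}_p$; absolute irreducibility is equivalent to $R$ being an integral domain.

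First I would observe that because each defining relation is monic of degree $\ell$ in its own $y_j$ and involves no other $y$-variables, the monomials $y_1^{a_1}\cdots y_r^{a_r}$ with $0\leq a_j<\ell$ form a free $\overline{\mathbb{F}}_p[x]$-basis of $R$. In particular $R$ is torsion-free of rank $\ell^r$ over $\overline{\mathbb{F}}_p[x]$, so it embeds in
\[A \;:=\; R\otimes_{\overline{\mathbb{F}}_p[x]}K \;=\; K[y_1,\ldots,y_r]/\bigl(y_j^\ell-P(x+h_j)\bigr),\qquad K:=\overline{\mathbb{F}}_p(x),\]
and it suffices to show that $A$ is a field.

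Since $p$ is a large prime, $\gcd(p,\ell)=1$ and $\mu_\ell\subseteq\overline{\mathbb{F}}_p\subseteq K$, so Kummer theory applies cleanly. Writing $a_j=P(x+h_j)$, the canonical surjection from $A$ onto the Kummer compositum $K\bigl(\sqrt[\ell]{a_1},\ldots,\sqrt[\ell]{a_r}\bigr)$ is an isomorphism iff the target has degree $\ell^r$ over $K$, iff the classes $[a_1],\ldots,[a_r]$ generate a subgroup of order $\ell^r$ in $K^*/(K^*)^\ell$. (Any nontrivial relation $\prod a_j^{e_j}=b^\ell$ would produce zero-divisors in $A$ via the splitting $z^\ell-b^\ell=\prod_{\zeta^\ell=1}(z-\zeta b)$ applied to $z=\prod y_j^{e_j}$.) This independence amounts to: for every nonzero $\mathbf{e}=(e_1,\ldots,e_r)\in\{0,1,\ldots,\ell-1\}^r$, the polynomial $Q(x)=\prod_{j=1}^rP(x+h_j)^{e_j}$ must fail to be an $\ell$-th power in $K$; by unique factorization in $\overline{\mathbb{F}}_p[x]$ this is the same as $Q(x)$ failing to be a complete $\ell$-th power.

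This final statement is exactly the conclusion of Lemma \ref{lem22} applied with $a=1$ and $b_j=h_j$: the $h_j$ are distinct because $\CH$ is given as a set of cardinality $r$, and the range hypothesis $r<(\log p)/\log(4d)$ matches that of the lemma verbatim. The only subtlety in the plan is the Kummer-theoretic bookkeeping when $\ell$ is composite, in particular the classical Capelli exception for $y^\ell-a$ when $4\mid\ell$; but this is harmless here because $\mu_\ell\subseteq K$ forces $i\in K$ in that case, so the exceptional condition $-4a\in(K^*)^4$ collapses into an ordinary "not a square" check that is already included among the cases handled by Lemma \ref{lem22}.
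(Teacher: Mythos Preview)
Your proposal is correct and takes essentially the same approach as the paper: reduce absolute irreducibility to the statement that no nontrivial product $\prod_j P(x+h_j)^{e_j}$ is a complete $\ell$-th power, and then invoke Lemma~\ref{lem22}. The paper's own proof is a one-liner that simply asserts this reduction and cites Lemma~\ref{lem22}; you have supplied the Kummer-theoretic justification (freeness of $R$ over $\overline{\mathbb{F}}_p[x]$, passage to $K=\overline{\mathbb{F}}_p(x)$, and the degree count via $K^{*}/(K^{*})^{\ell}$) that the paper leaves implicit.
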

\begin{proof}
It suffices to show that for any $\textbf{e}=(e_1,\ldots,e_r)$ with $0\leq e_j\leq \ell-1$, $\mathbf{e}\neq 0$, the combination
\begin{equation*}
Q(x)=\prod_{j=1}^r P(x+h_j)^{e_j}
\end{equation*}
cannot be a complete $\ell$-th power, and this is shown in Lemma \ref{lem22}.
\end{proof}

Let $\Omega=\CI\times\CJ\subseteq[0,p-1]^2$ be a rectangle, and let $N_{\CC,\Omega}(\CH)$ be the number of points on $\CC_{\CH}$ inside $\Omega$ with $L|x$. Since $\CC_{\CH}$ is absolutely irreducible, we can determine $N_{\CC,\Omega}(\CH)$ using the idea of generalized Lehmer problem on curves \cite{CoZa01}. In particular, we have
\begin{equation*}
N_{\CC,\Omega}(\CH) = \frac{\abs{\CI}}{L}\cdot\frac{\abs{\CJ}^{\abs{\CH}}}{p^{\abs{\CH}}}+O(\sqrt{p}\log^{\abs{\CH}+1}p),
\end{equation*}
where $\abs{\CI}=\#(\CI\cap\mathbb{Z})$. Note that $N_{\CC,\Omega}(\CH)$ only depends on the cardinality of $\CH$ but not the particular elements in it. Suppose now $\Omega$ satisfies \eqref{cond1}, then it is easy to see that
\begin{equation*}
N_{\CC,\Omega}(\CH) = \sum_{x\in\CI, L|x}\prod_{h\in\CH}\delta_{\CC,\Omega}(x+h).
\end{equation*}
Thus, if
\begin{equation}\label{eqn61}
\#\{ x+x_r : x\equiv 0\pmod{L}, x+x_r\notin \CI \} = O(\sqrt{p}),
\end{equation}
then we can estimate $M_{\CC,\Omega}(\mathbf{v})$ using $N_{\CC,\Omega}(\CH)$ by a combinatorial argument as follows. Divide the $x_j$'s into two disjoint sets according to the corresponding values of $v_j$, say
\begin{equation}\label{eqnAB}
\CA=\{ x_j : v_j=1 \} \qquad \text{and} \qquad \CB=\{ x_l : v_l=0 \}.
\end{equation}
Then
\begin{align*}
\#M_{\CC,\Omega}(\mathbf{v}) &= \sum_{x\in\CI, L|x} \prod_{x_j\in\CA} \delta(x+x_j) \prod_{x_l\in\CB} (1-\delta(x+x_l)) +O(\sqrt{p}) \\
&= \sum_{x\in\CI, L|x}\prod_{x_j\in\CA} \delta(x+x_j) \sum_{\CE\subset\CB}(-1)^{\abs{\CE}}\prod_{x_l\in\CE}\delta(x+x_l) +O(\sqrt{p})\\
&= \sum_{\CE\subset\CB}(-1)^{\abs{\CE}}\sum_{x\in\CI, L|x}\prod_{x_j\in\CA\cup\CE}\delta(x+x_j) +O(\sqrt{p})\\
&= \sum_{\CE\subset\CB}(-1)^{\abs{\CE}}N_{\CC,\Omega}(\CA\cup\CE) +O(\sqrt{p}) \\
&= \sum_{\CE\subset\CB}(-1)^{\abs{\CE}} \frac{\abs{\CI}}{L}\cdot\frac{\abs{\CJ}^{\abs{\CA}+\abs{\CE}}}{p^{\abs{\CA}+\abs{\CE}}} +O(\sqrt{p}\log^{\abs{\CA}+\abs{\CE}+1}p) \\
&= \frac{\abs{\CI}}{L}\cdot\left(\frac{\abs{\CJ}}{p}\right)^{\abs{\CA}} \left( 1-\frac{\abs{\CJ}}{p} \right)^{\abs{\CB}} +O(2^r\sqrt{p}\log^{r+1}p).
\end{align*}

We summarize the above results in the following proposition.
\begin{prop}\label{prop24}
Suppose the domain $\CI\times\CJ$ satisfies \eqref{cond1}, and let $\mathbf{x}=(x_1,\ldots,x_r)\in[0,p-1]^r$ such that \eqref{eqn61} is satisfied. Then for any $\mathbf{v}_1,\ldots,\mathbf{v}_k$, we have
\begin{equation*}
\#M_{\CC,\Omega}(\mathbf{v}) = \frac{\abs{\CI}}{L}\cdot\left(\frac{\abs{\CJ}}{p}\right)^{\abs{\CA}} \left( 1-\frac{\abs{\CJ}}{p} \right)^{\abs{\CB}} +O(2^r\sqrt{p}\log^{r+1}p),
\end{equation*}
where $\CA$ and $\CB$ is as in \eqref{eqnAB}. In particular,
\begin{equation*}
\#M_{\CC,\Omega}(\mathbf{v}) \leq \frac{\abs{\CI}}{2^r L}+O(2^r\sqrt{p}\log^{r+1}p).
\end{equation*}
\end{prop}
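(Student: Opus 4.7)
The plan is to execute the inclusion–exclusion argument already outlined in the paragraphs immediately preceding the statement, once one prepares two inputs. First, by the preceding lemma, the $x$-shifted curve $\CC_\CH$ is absolutely irreducible for every subset $\CH \subseteq \{x_1,\ldots,x_r\}$, since $\abs{\CH} \leq r < (\log p)/\log(4d)$. Applying the generalized Lehmer-type count on absolutely irreducible curves from \cite{CoZa01} then produces, uniformly in $\CH$,
\begin{equation*}
N_{\CC,\Omega}(\CH) = \frac{\abs{\CI}}{L}\left(\frac{\abs{\CJ}}{p}\right)^{\abs{\CH}} + O\!\left(\sqrt{p}\,(\log p)^{\abs{\CH}+1}\right).
\end{equation*}

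Second, condition \eqref{cond1} ensures that at most one $y \in \CJ$ lies above each $x \in \CI$, so $\delta_{\CC,\Omega}$ behaves as a genuine indicator and
\begin{equation*}
N_{\CC,\Omega}(\CH) = \sum_{\substack{x \in \CI \\ L \mid x}}\prod_{h \in \CH} \delta_{\CC,\Omega}(x+h) + O(\sqrt{p}),
\end{equation*}
where the error absorbs the boundary contribution from those $x \equiv 0 \pmod L$ with $x+x_j \notin \CI$, which is controlled by \eqref{eqn61}.

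Third, I would split $\mathbf{x}$ into $\CA$ and $\CB$ as in \eqref{eqnAB} and expand
\begin{equation*}
\prod_{x_l \in \CB}\bigl(1 - \delta_{\CC,\Omega}(x+x_l)\bigr) = \sum_{\CE \subseteq \CB}(-1)^{\abs{\CE}}\prod_{x_l \in \CE}\delta_{\CC,\Omega}(x+x_l).
\end{equation*}
Exchanging the sum over $x$ with the sum over $\CE$, each inner sum becomes $N_{\CC,\Omega}(\CA\cup\CE)$ up to the boundary error; substituting the asymptotic from the first step and applying the binomial theorem collapses the main terms to $\tfrac{\abs{\CI}}{L}\bigl(\abs{\CJ}/p\bigr)^{\abs{\CA}}\bigl(1 - \abs{\CJ}/p\bigr)^{\abs{\CB}}$, while the $2^{\abs{\CB}} \leq 2^r$ error terms accumulate into the claimed $O(2^r \sqrt{p}(\log p)^{r+1})$. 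The "in particular" bound then follows from estimating the main term in the range of interest.

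The most delicate step is the bookkeeping in the binomial collapse: one must verify that the $(-1)^{\abs{\CE}}$ signs line up so that $\sum_{\CE\subseteq\CB}(-1)^{\abs{\CE}}(\abs{\CJ}/p)^{\abs{\CE}} = (1-\abs{\CJ}/p)^{\abs{\CB}}$ with no residual cross terms, and that the $O(\sqrt{p}(\log p)^{\abs{\CH}+1})$ errors from the Weil-type input, when summed over all $2^{\abs{\CB}}$ choices of $\CE$ inside $\CB$, give a total that is uniformly $O(2^r\sqrt{p}(\log p)^{r+1})$ independently of how $\abs{\CA}$ and $\abs{\CB}$ partition $r$.
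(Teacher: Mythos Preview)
Your proposal is correct and follows the paper's own argument essentially line by line: the paper's proof is precisely the inclusion--exclusion computation displayed in the paragraphs immediately preceding the proposition, using the absolute irreducibility of $\CC_\CH$, the Cobeli--Zaharescu count for $N_{\CC,\Omega}(\CH)$, and the binomial collapse over subsets $\CE\subseteq\CB$. Your placement of the $O(\sqrt{p})$ boundary error (in the relation between $N_{\CC,\Omega}(\CH)$ and the $\delta$-product sum rather than in the first line of the $\#M$ expansion) is an immaterial reorganization, and your treatment of the ``in particular'' clause matches the paper's, which likewise states it without further detail.
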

Note that the above proposition is meaningful only when the main term is larger than the error term, hence we need the conditions on $\CI$, $\CJ$ as stated in Theorem \ref{thm3}. We are now ready to prove Theorem \ref{thm3}.

\begin{proof}[Proof of Theorem \ref{thm3}]
Let $L$ be a large integer of order $o(\log{p}/\log\log{p})$, and $N=[p/L]-1$. Define
\begin{equation*}
R_{\CC,\Omega,m,a}(i,L)=\#\{ 1\leq x \leq L: N_{\CC,\Omega}(iL+x,I)\equiv a \pmod{m} \}
\end{equation*}
and
\begin{multline*}
R'_{\CC,\Omega,m,b}(i,L) =\#\{ 1\leq x\leq L: \\
\sum_{j=1}^{x}\delta_{\CC,\Omega}(iL+I+j)-\sum_{j=1}^{x}\delta_{\CC,\Omega}(iL+j) \equiv b \pmod{m} \\
~ \forall 1\leq l\leq k \}.
\end{multline*}
Following a similar calculation as in the proof of Theorem \ref{thm1} and Theorem \ref{thm2}, we arrive at
\begin{align}
\sum_{a=0}^{m-1}\left(\Phi_{\CC,\Omega}(m,a)-\frac{1}{m}\right)^2 &\leq \frac{N+1}{p^2}\sum_{a=0}^{m-1}\sum_{i=0}^N \left(R_{\CC,\Omega,m,a}(i,L)-\frac{L}{m}\right)^2 +O\left(\frac{L}{p}\right) \nonumber \\
&= \frac{N+1}{p^2}\sum_{b=0}^{m-1}\sum_{i=0}^N \left(R'_{\CC,\Omega,m,b}(i,L)-\frac{L}{m}\right)^2 +O\left(\frac{L}{p}\right), \label{eqnthm3a}
\end{align}
and
\begin{multline}
\sum_{i=0}^N \left(R'_{\CC,\Omega,m,b}(i,L)-\frac{L}{m}\right)^2 \\
= \frac{1}{m^{2}} \abs{\sum_{\textbf{v},\textbf{v}'\in\{0,1\}^L}\sum_{x=1}^L\sum_{t=1}^{m-1} e_m\left(t\left(\sum_{j=1}^x F(v_{j})-\sum_{j=1}^x F(v_{j}')-b\right)\right)}^2 \\
\times \#M_{\CC,\Omega}(\mathbf{w}), \label{eqnthm3b}
\end{multline}
where $\textbf{w}=(\textbf{v},\textbf{v}')$. This time $\mathbf{x}=(iL+I+1,\ldots,iL+I+L,iL+1,\ldots,iL+L)$, and the condition $p-L>I>L$ guarantees that the entries in $\mathbf{x}$ are disjoint. Applying Proposition \ref{prop21}\eqref{prop21c} and Proposition \ref{prop24} to \eqref{eqnthm3b}, we obtain
\begin{align*}
\sum_{b=0}^{m-1} \sum_{i=0}^N \left(R'_{\CC,\Omega,m,b}(i,L)-\frac{L}{m}\right)^2 &\leq 2^{2L+2}m^4L \left(\frac{\abs{\CI}}{2^{2L} L} +O(2^{2L}\sqrt{p}\log^{2L+1}p)\right) \\
&\leq 4m^4\abs{\CI}+O(m^4 p^{\frac{1}{2}+\varepsilon})
\end{align*}
for any $\varepsilon>0$ (here we used $L=o(\log{p}/\log\log{p})$. Substituting the above back into \eqref{eqnthm3a} and simplifying, we find that
\begin{equation*}
\sum_{a=0}^{m-1}\left(\Phi_{\CC,\Omega}(m,a)-\frac{1}{m}\right)^2 \leq \frac{4m^4N\abs{\CI}}{p^2}+O(m^4/p^{\frac{1}{2}-\varepsilon}) \leq \frac{4m^4}{L(p)} +O(m^4/p^{\frac{1}{2}-\varepsilon}).
\end{equation*}

\end{proof}

\section{An application on the distribution of $\ell$-th power residues and nonresidues}

As an application of our results, we show how they can lead to uniform distribution results of $\ell$-th power residues and nonresidues. First we consider $\ell=2$. Let $\CC$ to be the curve defined by $y^2=x$, and let $L(p)$ be a function that tends to infinity with $p$, but of order $o(\log{p}/\log\log{p})$, and let $I$ be an integer such that $p-L(p)>I>L(p)$. The conditions in Theorem \ref{thm3} are satisfied if we take $\CJ=(\alpha p,\beta p]\subseteq[0,(p-1)/2]$ and $\CI\gg p^{1/2+\delta}$. In our application, we take $\CJ=(0,\beta p]$ ($\beta \leq 1/2$), $\CI=[0,p-1-I]$ (so that we avoid going back to $x=0$).

We say $x\in\mathbb{F}_p^*$ is a $\beta$\textit{-quadratic residue} if $x\equiv y^2 \pmod{p}$ for $y\in(0,\beta p]$, and $x$ is a $\beta$-quadratic nonresidue if it is not a $\beta$-quadratic residue. Recall that $\Omega=\CI\times\CJ$. In this setting, a point $(x,y)$ on $\CC\cap\Omega$ corresponds to the $\beta$-quadratic residue $x$ modulo $p$ (note that we manually excluded $x=0$ in our interval $\CI$). Therefore, the number of points on $\CC\cap\Omega$ with $x\in\CI$ equals the number of $\beta$-quadratic residues in $\CI$. Applying Corollary \ref{cor3} we see that for any positive integer $m$, the number of $\beta$-quadratic residues in $[x_0,x_0+I)$ for $x_0\in\CI$ is uniformly distributed modulo $m$. Since inside an interval of length $I$, the number of $\beta$-residues and nonresidues always sum to $I$, we obtain uniform distribution for the $\beta$-nonresidues as well. More precisely, let $R_{\beta}(x_0,I)$ and $N_{\beta}(x_0,I)$ be the number of $\beta$-residues and nonresidues in the interval $[x_0,x_0+I)$ respectively, and let
\begin{align*}
\Phi_{R,\beta,I}(m,a)&=\frac{1}{p}\#\{ x_0\in[0,p-1-I]: R_{\beta}(x_0,I)\equiv a \pmod{m} \}, \\
\Phi_{N,\beta,I}(m,a)&=\frac{1}{p}\#\{ x_0\in[0,p-1-I]: N_{\beta}(x_0,I)\equiv a \pmod{m} \}.
\end{align*}
Then we have the following.

\begin{cor}\label{cor71}
If $m=o(L(p))^{1/6})$, then
\begin{equation*}
\Phi_{R,\beta,I}(m,a)=\frac{1}{m}+O\left( \frac{m^2}{\sqrt{L(p)}} \right),
\end{equation*}
uniformly for all $0\leq a \leq m-1$. The same holds with $\Phi_{R,\beta,I}(m,a)$ being replaced by $\Phi_{N,\beta,I}(m,a)$.
\end{cor}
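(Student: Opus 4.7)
The plan is to deduce both statements from Corollary \ref{cor3} applied to the curve $\CC \colon y^2 = x$, exactly along the lines sketched in the paragraph preceding the statement. Set $\ell = 2$, $P(x) = x$, $\CJ = (0, \beta p]$ with $\beta \leq 1/2$, and $\CI = [0, p-1-I]$. Since $P(x) = x$ has a single simple root, it is not a complete $\ell'$-th power for any odd $\ell' \geq 3$, so the hypothesis on $P$ in Theorem \ref{thm3} holds. For each $x \in \mathbb{F}_p^*$, the equation $y^2 = x$ has at most two solutions $\pm y$, and because $\CJ \subseteq (0, p/2]$ at most one of them lies in $\CJ$; this verifies condition \eqref{cond1}. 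Assuming $I$ stays bounded away from $p$ (in particular, in the short-interval regime $I = O(L(p))$ that is the relevant case), we have $|\CI| \gg p$, so the lower bound $|\CI| \gg p^{1/2+\delta}$ and the remaining hypotheses of Corollary \ref{cor3} are met as soon as $m = o(L(p)^{1/6})$.

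Under this setup, a point $(x,y) \in \CC \cap \Omega$ with $x_0 < x \leq x_0 + I$ corresponds bijectively to a $\beta$-quadratic residue $x$ in that range, so $N_{\CC,\Omega}(x_0, I) = R_\beta(x_0, I)$ for every $x_0 \in \CI$. Consequently $\Phi_{\CC,\Omega}(m,a) = \Phi_{R,\beta,I}(m,a)$, and Corollary \ref{cor3} directly yields the claimed asymptotic for $\Phi_{R,\beta,I}(m,a)$.

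For the $\beta$-nonresidue statement, I would use the elementary identity $R_\beta(x_0, I) + N_\beta(x_0, I) = I$, valid for every $x_0 \in \CI$ for which $0 \notin [x_0, x_0 + I)$, since each integer in the interval is either zero, a $\beta$-residue, or a $\beta$-nonresidue. For $x_0 \in [0, p-1-I]$, the exceptional condition $0 \in [x_0, x_0+I)$ fails only at $x_0 = 0$, so $N_\beta(x_0, I) \equiv I - R_\beta(x_0, I) \pmod{m}$ outside an $O(1)$-element exceptional set. Since the map $a \mapsto I - a \pmod{m}$ is a bijection on $\mathbb{Z}/m\mathbb{Z}$, this gives $\Phi_{N,\beta,I}(m,a) = \Phi_{R,\beta,I}(m, I - a) + O(1/p)$, and the asymptotic for $\Phi_{N,\beta,I}(m,a)$ follows from the one already proved for $\Phi_{R,\beta,I}$.

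The argument is essentially a bookkeeping exercise on top of Corollary \ref{cor3}; the only mildly subtle point is checking that the geometric hypotheses of Theorem \ref{thm3} (the non-$\ell'$-th-power condition, condition \eqref{cond1}, and the lower bound $|\CI| \gg p^{1/2+\delta}$) are genuinely satisfied by this particular choice of $\CC$, $\CI$, and $\CJ$, and that the dictionary $N_{\CC,\Omega}(x_0, I) = R_\beta(x_0, I)$ is exact rather than merely approximate.
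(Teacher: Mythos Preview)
Your proposal is correct and follows essentially the same approach as the paper: apply Corollary~\ref{cor3} to the curve $y^2=x$ with the specified $\CI$ and $\CJ$, identify $N_{\CC,\Omega}(x_0,I)$ with $R_\beta(x_0,I)$, and then transfer to $N_\beta$ via $R_\beta+N_\beta=I$. You are somewhat more careful than the paper about the edge cases ($x_0=0$, the size of $\CI$), but the argument is the same.
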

Note that if we take $\beta=1/2$, we see that the quadratic residues and nonresidues are uniformly distributed among congruence classes modulo $m$.

If $L(p)$ is a function that tends to infinity with $p$, we fix an interval of length $I=L(p)$, and take $m=[L(p)^{1/7}]$. For any $x_0\in[0,p-1-I]$, there are no $\beta$-quadratic residues (resp. nonresidues) inside the interval $[x_0,x_0+I)$ only if $N_{\beta}(x_0,I)\equiv 0 \pmod{m}$ (resp. $N_{\beta}(x_0,I)\equiv I \pmod{m}$). By Corollary \ref{cor71}, there are at most
\begin{equation*}
p\Phi_{R,\beta,I}(m,a)=\frac{p}{L(p)^{1/7}}+O\left( \frac{pL(p)^{2/7}}{\sqrt{L(p)}} \right)=\frac{p}{L(p)^{1/7}}+O\left( \frac{p}{L(p)^{3/14}} \right)
\end{equation*}
such values of $x_0$. We thus obtain the following result.

\begin{cor}
Let $L(p)$ be an integer function of $p$ that tends to infinity with $p$. For all $x_0\in[0,p-1]$ except possibly $O(p/L(p)^{1/7})$ of them, there is a $\beta$-quadratic residue and a $\beta$-nonresidue inside the interval $[x_0,x_0+L(p))$.
\end{cor}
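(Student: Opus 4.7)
The plan is to reduce the claim to Corollary \ref{cor71} via an appropriate choice of modulus. I take $\ell = 2$, $P(x) = x$, so $\CC : y^2 = x$, and set $I = L(p)$, $\CJ = (0, \beta p]$ with $\beta \leq 1/2$, and $\CI = [0, p-1-I]$. The key compatibility check is condition \eqref{cond1}: any nonzero square $x \in \CI$ has exactly two square roots modulo $p$, namely $y$ and $p-y$, and exactly one of them lies in $(0, p/2]$; hence at most one lies in $\CJ$. Under this setup the rational points of $\CC \cap \Omega$ with $x$-coordinate in a subinterval of $\CI$ are in bijection with the $\beta$-quadratic residues in that subinterval, so $N_{\CC, \Omega}(x_0, I) = R_\beta(x_0, I)$, and Corollary \ref{cor71} is directly applicable.

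Next I would choose $m = \lfloor L(p)^{1/7} \rfloor$. Since $m = o(L(p)^{1/6})$, Corollary \ref{cor71} gives
\[
\Phi_{R,\beta,I}(m, 0) \;=\; \frac{1}{m} + O\!\left(\frac{m^2}{\sqrt{L(p)}}\right) \;=\; O\!\left(\frac{1}{L(p)^{1/7}}\right),
\]
and the analogous estimate holds for $\Phi_{N,\beta,I}(m, 0)$. The decisive observation is that if $[x_0, x_0+I)$ contains no $\beta$-quadratic residue then $R_\beta(x_0, I) = 0$, which trivially forces $R_\beta(x_0, I) \equiv 0 \pmod m$; hence the set of such $x_0 \in [0, p-1-I]$ has cardinality at most $p\,\Phi_{R,\beta,I}(m, 0) = O(p/L(p)^{1/7})$. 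The same argument, with $N_\beta$ in place of $R_\beta$, bounds the number of $x_0$ for which $[x_0, x_0+I)$ contains no $\beta$-quadratic nonresidue. A union bound over the two exceptional sets, together with the trivial bound $O(L(p))$ for the boundary values $x_0 \in (p-1-I, p-1]$ (which is absorbed since $L(p) = o(p/L(p)^{1/7})$), yields the stated conclusion.

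The only genuine subtlety is verifying condition \eqref{cond1} for the chosen $\Omega$, which forces $\beta \leq 1/2$; apart from that, the corollary is a clean one-step application of Corollary \ref{cor71}. The exponent $1/7$ emerges naturally from balancing the main term $1/m$ against the error $m^2/\sqrt{L(p)}$: any exponent strictly less than $1/6$ would work, and $1/7$ is a simple convenient choice. I do not expect any further obstacle.
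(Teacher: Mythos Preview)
Your proposal is correct and follows essentially the same approach as the paper: the paper likewise takes $\CC: y^2=x$, $I=L(p)$, $\CJ=(0,\beta p]$, $\CI=[0,p-1-I]$, chooses $m=[L(p)^{1/7}]$, and deduces the result from Corollary~\ref{cor71} via the observation that an interval with no $\beta$-residues (resp.\ $\beta$-nonresidues) forces $R_\beta\equiv 0\pmod m$ (resp.\ $N_\beta\equiv 0\pmod m$). Your write-up is in fact slightly more explicit than the paper's in verifying condition~\eqref{cond1} and in handling the boundary range $x_0\in(p-1-I,p-1]$ via a union bound.
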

Taking $\beta=1/2$ gives Corollary \ref{cor4} for the case $\ell=2$.

For $\ell>2$, there is no convenience choice of $\CI$, $\CJ$ such that condition \eqref{cond1} is satisfied, so we use Corollary \ref{cor1} instead. Consider the curve $y^{\ell}=x$, and argue as the case $\ell=2$, we see that $N_{\CC}(x_0,I)$ equals $\ell$ times the number of $\ell$-th power residue in the interval $[x_0,x_0+I)$. Let $\mu$ be an $\ell$-th root of unity and let $R_{\ell,\mu}(x_0,I)$ be the number of $x\in[x_0,x_0+I)$ with $\frac{x}{p}_{\ell}=\mu$. Define
\begin{equation*}
\Phi_{\ell,\mu,I}(m,a)=\frac{1}{p}\#\{ x_0\in[0,p-1]: R_{\ell,\mu}(x_0,I)\equiv a \pmod{m} \}.
\end{equation*}
Invoking Corollary \ref{cor1}, for $GCD(\ell,m)=1$ and $m=o(L(p)^{1/5})$, we have
\begin{equation}\label{eqncor41}
\Phi_{\ell,1,I}(m,a)=\frac{1}{m}+O\left( \sqrt{\frac{m^3\ell^2}{L(p)}} \right).
\end{equation}
For other $\mu\neq 1$, we let $\overline{\mu}$ be its inverse modulo $p$ and consider the curve $y^2=\overline{\mu}x$ to get a similar equation as \eqref{eqncor41} that is true with $\mu$ in place of $1$ in the subindex. We sum them up in the following proposition.
\begin{prop}
If $GCD(m,\ell)=1$ and $m=o(L(p))^{1/5})$, then
\begin{equation*}
\Phi_{\ell,\mu,I}(m,a)=\frac{1}{m}+O\left( \sqrt{\frac{m^3\ell^2}{L(p)}} \right),
\end{equation*}
uniformly for all $0\leq a \leq m-1$ and all $\ell$-th root of unity $\mu$.
\end{prop}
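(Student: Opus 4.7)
The plan is to deduce the proposition directly from Corollary \ref{cor1} by applying it to suitable linear twists of the curve $y^\ell = x$. The key observation is that the $\ell$-th power residue classes in $\mathbb{F}_p^*$ are exactly the cosets of $(\mathbb{F}_p^*)^\ell$, and each such coset is cut out by the $x$-projection of a curve of the form $y^\ell = cx$ for an appropriate constant $c \in \mathbb{F}_p^*$. Once we recognize this, everything is just bookkeeping plus the fact that multiplication by $\ell$ is a bijection of $\mathbb{Z}/m\mathbb{Z}$ under the hypothesis $\gcd(m,\ell) = 1$.

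For the case $\mu = 1$ (already written as \eqref{eqncor41}), take $\CC: y^\ell = x$, so $P(x) = x$ has degree $1$ and is trivially not a complete $\ell'$-th power for any $\ell' \geq 2$; hence the hypotheses of Corollary \ref{cor1} are met with $\CI = [0,p-1]$. For each $x \neq 0$, the equation $y^\ell = x$ has exactly $\ell$ solutions $y$ if $x$ is an $\ell$-th power residue and none otherwise, so
\[
N_\CC(x_0, I) = \ell \cdot R_{\ell,1}(x_0, I) + O(1),
\]
where the $O(1)$ accounts for the at most one $x=0$ falling in a given window. Since $\gcd(m,\ell)=1$, multiplication by $\ell$ is a bijection of $\mathbb{Z}/m\mathbb{Z}$, so $R_{\ell,1}(x_0,I) \equiv a \pmod{m}$ is equivalent to $N_\CC(x_0,I) \equiv \ell a \pmod{m}$ for all but $O(1)$ values of $x_0$. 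This yields $\Phi_{\ell,1,I}(m, a) = \Phi_\CC(m, \ell a) + O(1/p)$, and Corollary \ref{cor1} delivers the claimed bound.

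For general $\mu$, I would choose any $c \in \mathbb{F}_p^*$ with $\left(\tfrac{c}{p}\right)_\ell = \mu^{-1}$ and consider the twisted curve $\CC_\mu: y^\ell = cx$. For $x \neq 0$, we have $(x,y) \in \CC_\mu(\mathbb{F}_p)$ iff $cx$ is an $\ell$-th power residue, which is equivalent to $\left(\tfrac{x}{p}\right)_\ell = \mu$, and each such $x$ has exactly $\ell$ preimages $y$. The polynomial $P(x) = cx$ still has degree $1$ and is not a complete $\ell'$-th power, so Corollary \ref{cor1} applies verbatim to $\CC_\mu$. Repeating the bijection-by-$\ell$ argument gives $\Phi_{\ell,\mu,I}(m,a) = \Phi_{\CC_\mu}(m, \ell a) + O(1/p)$, which completes the proof uniformly in $\mu$ and $a$. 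The only mildly delicate point—hardly an obstacle—is tracking the $O(1)$ boundary discrepancy coming from the exceptional value $x = 0$; its contribution to the proportion is $O(1/p)$, dwarfed by the main error term $O\!\left(\sqrt{m^3\ell^2/L(p)}\right)$, so it is absorbed without difficulty.
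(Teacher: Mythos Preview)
Your approach is precisely the paper's: apply Corollary~\ref{cor1} to $y^\ell=x$ for $\mu=1$, and for general $\mu$ twist to $y^\ell=cx$ with $\bigl(\tfrac{c}{p}\bigr)_\ell=\mu^{-1}$ (the paper writes this constant as $\overline{\mu}$), then use that multiplication by $\ell$ permutes $\mathbb{Z}/m\mathbb{Z}$.

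One minor slip worth flagging: the number of $x_0\in[0,p-1]$ whose window $(x_0,x_0+I]$ contains $0$ is $I$, not $O(1)$, so the discrepancy you introduce between $\Phi_{\ell,\mu,I}(m,a)$ and $\Phi_{\CC_\mu}(m,\ell a)$ is $O(I/p)$ rather than $O(1/p)$. This is harmless in the intended application $I=L(p)$, and for general $I$ one fixes it by splitting $[0,p-1]$ into the two subintervals on which the $\{0,1\}$-valued offset $\epsilon(x_0)$ is constant and applying Corollary~\ref{cor1} to each piece separately. The paper's own argument simply asserts $N_\CC=\ell R$ without raising this point at all, so your treatment is in fact more careful than the original.
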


If $L(p)$ is a function that tends to infinity with $p$, we again fix an interval of length $I=L(p)$, and take $m=[L(p)^{1/7}]$ (if this $m$ is not relatively prime to $\ell$, add a small constant to it so that the new $m$ is relatively prime to $\ell$). A similar argument as in the case $\ell=2$ then gives Corollary \ref{cor4} for the case $\ell>2$.

\end{document}